\let\le\leqslant
\let\ge\geqslant
\let\a\alpha
\let\b\beta
\let\l\lambda
\let\eps\varepsilon
\let\mf\mathfrak
\let\mc\mathcal
\DeclareMathOperator{\ZZ}{\mathbb{Z}}
\DeclareMathOperator{\RR}{\mathbb{R}}
\DeclareMathOperator{\NN}{\mathbb{N}}
\DeclareMathOperator{\HH}{\mathbb{H}}
\DeclareMathOperator{\QQ}{\mathbb{Q}}
\DeclareMathOperator{\AI}{\mathbb{A}}
\DeclareMathOperator{\Isom}{Isom}
\DeclareMathOperator{\GL}{GL}
\DeclareMathOperator{\Vol}{Vol}
\DeclareMathOperator{\im}{Im}
\DeclareMathOperator{\Imm}{Im}
\DeclareMathOperator{\Ree}{Re}
\DeclareMathOperator{\Sal}{Sal}
\DeclareMathOperator{\OO}{O}
\theoremstyle{definition}
\newtheorem{lemma}{Lemma}[section]
\newtheorem{proposition}[lemma]{Proposition}
\newtheorem{definition}[lemma]{Definition}
\newtheorem{theorem}[lemma]{Theorem}
\newtheorem{remark}[lemma]{Remark}
\numberwithin{equation}{section}
\begin{document}
	
	\author{A\lowercase{lexandr} G\lowercase{rebennikov}}
	\title{M\lowercase{ultiplicities in the length spectrum and growth rate of }S\lowercase{alem numbers}}
	\address{A.~Grebennikov:  IMPA, Estrada Dona Castorina 110, Jardim Botanico, Rio de Janeiro, Brazil}
	\email{sagresash@yandex.ru}	
	\thanks{Alexandr Grebennikov \orcidlink{0000-0003-4715-8588}: IMPA, Rio de Janeiro, Brazil; sagresash@yandex.ru}
	\keywords{Arithmetic orbifold, Geodesic length spectrum, Salem number. \\ \textit{Mathematics Subject Classification}. 11F06, 11K16, 20H10}	
	
	\begin{abstract}
		
		We prove that mean multiplicities in the length spectrum of a non-compact arithmetic hyperbolic orbifold of dimension $n \geqslant 4$ have exponential growth rate		
		$$
		\langle g(L) \rangle \geqslant c \frac{e^{([n/2] - 1)L}}{L^{1 + \delta_{5, 7}(n) }},
		$$
		extending the analogous result for even dimensions of Belolipetsky, Lalín, Murillo and Thompson. Our proof is based on the study of (square-rootable) Salem numbers. 
		
		As a counterpart, we also prove an asymptotic formula for the distribution of square-rootable Salem numbers by adapting the argument of G\"otze and Gusakova. It shows that one can not obtain a better estimate on mean multiplicities using our approach. 
		
	\end{abstract}
	
	\maketitle	
	
	\section{Introduction}
	
	The length spectrum of a (negatively curved) Riemannian manifold is defined as the list of lengths of its primitive closed geodesics written in ascending order and with multiplicities. 
	The length spectrum has received a lot of attention because it encodes geometric properties of the manifold and is closely related to the eigenvalue spectrum of its Laplace-Beltrami operator.
	
	The multiplicities in the length spectrum are believed to reflect how rigid our space is. A generic manifold of variable negative curvature is not supposed to have any additional structure, thus its spectrum has no non-trivial multiplicities \cite{Anosov83}. Non-arithmetic hyperbolic manifolds represent intermediate situation: their multiplicities are known to be unbounded in dimensions two \cite{Randol80} and three \cite{Masters00}, and mean multiplicities can sometimes grow exponentially \cite{BS04}, though with a small constant in the exponent. 
	
	Current note is devoted to ``the structured'' case: arithmetic hyperbolic orbifolds, which may be defined as quotients of the hyperbolic space $\HH^n$ by an action of an arithmetic lattice $\Gamma \subset \Isom(\HH^n)$. Our Theorem \ref{thm:multiplicities_geom} states that mean multiplicities in the length spectrum of a non-compact arithmetic hyperbolic orbifold of dimension $n \ge 4$ have exponential growth rate. Analogous statements in dimensions two \cite{BGGS92}, \cite{Bolte93} and three \cite{Marklof96} are well-known, in these cases even sharper bounds are known and non-compactness is not required. The case of even dimension at least four is due to Belolipetsky, Lalín, Murillo and Thompson \cite[Proposition 3]{Bel21}, we adapt their approach to work in odd dimensions as well and present a uniform proof for both cases for the sake of transparency.
	
	\begin{theorem}
		\label{thm:multiplicities_geom}
		Let $\mc{O} = \HH^n/\Gamma$ be a non-compact arithmetic hyperbolic orbifold of dimension $n \ge 4$, and let $G(L)$ be the set of its primitive closed geodesics of length at most $L$. Then for sufficiently large $L \ge L_0(n, \Gamma)$ we have
		\[
		\langle g(L) \rangle = \frac{\# G(L)}{\# \{\ell(\gamma) : \gamma \in G(L)\}} \ge c \frac{e^{([n/2] - 1)L}}{L^{1 + \delta_{5, 7}(n) }},
		\]
		where $\delta_{5, 7}(n) = \begin{cases} 1, & \text{if } n = 5, 7; \\ 0, & \text{otherwise,} \end{cases}$ \;\;\; and
		\begin{enumerate}
			\item[(1)] when $\Gamma$ is torsion-free (or, equivalently, when $\mc{O}$ is a manifold), $c = c(n) > 0$ is a constant depending only on the dimension;
			
			\item[(2)] in the general case, $c = c(n, \Gamma) > 0$ is a constant depending only on the lattice and its dimension.
		\end{enumerate}
	\end{theorem}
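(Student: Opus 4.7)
The plan is to sandwich the mean multiplicity between a lower bound on $\#G(L)$ coming from the prime geodesic theorem and an upper bound on $\#\{\ell(\gamma):\gamma\in G(L)\}$ coming from a counting estimate for Salem numbers. First I would invoke the prime geodesic theorem for finite-volume hyperbolic $n$-orbifolds in the form
\[
\#G(L) \;\ge\; c_1\,\frac{e^{(n-1)L}}{L}
\]
for all sufficiently large $L$; the constant $c_1$ depends only on $n$ in the torsion-free setting and on $(n,\Gamma)$ through the covolume and the orders of the elliptic stabilisers in general, which accounts for the case split in the statement.

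Each primitive closed geodesic corresponds to a conjugacy class of hyperbolic $\gamma\in\Gamma$ with $\ell(\gamma)=\log\lambda_\gamma$, where $\lambda_\gamma>1$ is the unique eigenvalue exceeding one of $\gamma$ acting on the hyperboloid model. Because $\mc{O}$ is non-compact and arithmetic, after passing to a commensurable subgroup one has $\Gamma\subset \OO(n,1)_\ZZ$, so the characteristic polynomial of $\gamma$ is reciprocal with integer coefficients. Its remaining eigenvalues are $\lambda_\gamma^{-1}$ together with unit-modulus complex numbers, so the minimal polynomial of $\lambda_\gamma$ is a Salem polynomial of even degree at most $n$ if $n$ is even and at most $n+1$ if $n$ is odd. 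The spinor / Clifford realisation of $\Isom(\HH^n)$ imposes the additional constraint that $\lambda_\gamma$ be \emph{square-rootable}, so the set of lengths injects into the set $\Sal^{\mathrm{sq}}_{\le d}(e^L)$ of square-rootable Salem numbers of degree at most $d=2\lceil n/2\rceil$ and absolute value at most $e^L$.

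The crucial counting input, to be established via a G\"otze--Gusakova-style volume computation on the Salem locus in coefficient space, is the upper bound
\[
\#\Sal^{\mathrm{sq}}_{\le d}(T) \;\ll\; T^{d/2}\,(\log T)^{\delta_{5,7}(n)}.
\]
The exponent $d/2$ equals the number of real parameters of a Salem polynomial of degree $d$ (one for $\lambda$, the remaining $d/2-1$ for the angles of its unit-circle conjugate pairs), and the exceptional $\log T$ for degrees $6$ and $8$ (that is, $n=5,7$) arises from a boundary divergence at configurations in which two unit-circle pairs collide. Substituting $T=e^L$ and dividing the prime-geodesic lower bound by this upper bound gives
\[
\langle g(L)\rangle \;\ge\; c\,\frac{e^{(n-1)L}/L}{e^{\lceil n/2\rceil L}\,L^{\delta_{5,7}(n)}} \;=\; c\,\frac{e^{([n/2]-1)L}}{L^{1+\delta_{5,7}(n)}},
\]
which is the bound claimed.

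The heart of the argument lies in the Salem-number count: proving the sharp bound $T^{d/2}$ with precisely the two logarithmic corrections at $n=5,7$, and checking that restricting to the square-rootable subclass does not cost an extra factor, is the technical centrepiece — indeed the companion distribution result announced in the abstract says this exponent is best possible, so no improvement is available from this route. The remaining ingredients — the dictionary between hyperbolic isometries and Salem numbers and the prime geodesic theorem in the orbifold setting — are standard, with the non-compactness hypothesis entering only to secure that the arithmetic data, and hence the Salem polynomials, are defined over $\QQ$ rather than over a general totally real field.
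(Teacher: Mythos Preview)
Your overall strategy---prime geodesic theorem for the numerator, Salem-number count for the denominator---is the paper's, but the Salem side is garbled in a way that is not merely cosmetic. The Emery--Ratcliffe--Tschantz dictionary (Theorem~\ref{thm:ERT}) treats the parities differently: for even $n$, $e^{\ell(H)}$ is a Salem number of degree at most $n$, with \emph{no} square-rootability asserted; for odd $n$ it is $e^{2\ell(H)}$ (not $e^{\ell(H)}$) that is a Salem number, of degree at most $n+1$, and square-rootable. Your uniform claim that $\lambda_\gamma=e^{\ell(\gamma)}$ is always square-rootable via a ``spinor/Clifford realisation'' is not what that theorem provides.

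More seriously, you have the role of square-rootability backwards. Your bound $\#\Sal^{sq}_{\le d}(T)\ll T^{d/2}$, justified by the parameter count for a generic Salem polynomial, is the bound for \emph{all} Salem numbers; the square-rootable ones obey the much smaller estimate $\#\Sal^{sq}_m(T)\ll T^{m/2}\eta(T,m)$, i.e.\ essentially $T^{d/4}$. Square-rootability is not a harmless restriction that ``does not cost an extra factor''---it is precisely the mechanism that \emph{saves} a factor of $T^{d/4}$, and this saving is what rescues the odd case, where one is forced to count $e^{2\ell(H)}\le e^{2L}$ and the naive all-Salem bound $(e^{2L})^{(n+1)/2}$ would swamp the numerator. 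Your two errors do not innocently cancel: if $e^{\ell(H)}$ really were square-rootable of degree $\le n+1$, the \emph{correct} count $T^{(n+1)/4}$ with $T=e^L$ would yield $\langle g(L)\rangle\gg e^{(3n-5)L/4}/L$, far stronger than the theorem and than this method can possibly give. Finally, the extra $\log T$ at $n=5,7$ has nothing to do with colliding unit-circle pairs; it comes from summing over the square-free parameter $\alpha$ in Definition~\ref{def:square-rootable}: for $m=3,4$ the relevant series is $\sum_{\alpha}\alpha^{-1}$, which diverges logarithmically. (A smaller point: the $\Gamma$-dependence in case (2) does not enter through the prime geodesic constant, which is $1/(n-1)$ universally, but through the bounded failure of the geodesic/conjugacy-class bijection when $\Gamma$ has torsion.)
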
 
	
	It will be convenient for us to reformulate Theorem \ref{thm:multiplicities_geom} in more algebraic terms. An isometry $H \in \Isom(\HH^n) \simeq \OO^+(n, 1)$ is called a \textit{hyperbolic transformation} if there is a unique geodesic in $\HH^n$ along which $H$ acts as a translation by distance $\ell(H) > 0$. 
	This geodesic is called the \textit{axis} of $H$, and $\ell(H)$ is called the \textit{translation length} of $H$.	
	
	When $\Gamma$ is torsion-free, there is a well-known one-to-one correspondence between the set of primitive closed geodesics in $\HH^n/\Gamma$ and the set of conjugacy classes of primitive hyperbolic transformations in $\Gamma$ (for instance, see \cite[Theorem 9.6.2]{Ratcliffe19}). Moreover, the length $\ell(\gamma)$ of any closed geodesic $\gamma$ coincides with the translation length $\ell(H)$ of any hyperbolic transformation $H$ in the corresponding conjugacy class. 
	
	If $\Gamma$ contains non-trivial elements of finite order, this bijective correspondence slightly fails: different conjugacy classes may correspond to the same closed geodesic in the quotient. However, one can check that the number of conjugacy classes corresponding to the same geodesic is bounded above by some constant depending only on the lattice $\Gamma$. This issue was overlooked in the first version of this work, and explains the reason for introducing two cases in the statement of Theorem \ref{thm:multiplicities_geom}. We refer the reader to Section \ref{sec:bijection} for a more detailed discussion of this correspondence and its use in this note.
	
	\medskip
	
	Therefore, Theorem \ref{thm:multiplicities_geom} follows from Theorem \ref{thm:multiplicities_alg}, stated in terms of the lattice $\Gamma$.
	
	\begin{theorem}
		\label{thm:multiplicities_alg}
		Let $\Gamma$ be a non-cocompact arithmetic lattice in $\Isom(\HH^n) \simeq \OO^+(n, 1)$ for $n \ge 4$. Consider the set $\Gamma_h(L)$ of conjugacy classes of primitive hyperbolic transformations $H \in \Gamma$ with translation lengths bounded as $\ell(H) \le L$. Group the elements of this set into parts according to the value of $\ell(H)$. Then for sufficiently large $L \ge L_0(n, \Gamma)$ average size of such part can be bounded below as
		\[
		\frac{\# \Gamma_h(L)}{\#\{ \ell(H) : H \in \Gamma_h(L) \}} \ge c  \frac{e^{([n/2]-1)L}}{L^{1 + \delta_{5, 7}(n)}}
		\]
		for some constant $c = c(n) > 0$.
	\end{theorem}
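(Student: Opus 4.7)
The plan is to translate the counting problem to Salem numbers, lower-bound the geodesic count via the Prime Geodesic Theorem, and upper-bound the number of distinct lengths by counting Salem polynomials. A primitive hyperbolic transformation $H \in \Gamma$ has a unique real eigenvalue $\lambda = e^{\ell(H)} > 1$, its reciprocal $\lambda^{-1}$, and the remaining $n - 1$ eigenvalues on the unit circle. Since $\Gamma$ is non-cocompact arithmetic in $\OO^+(n, 1)$, after passing to a commensurable subgroup we may assume $\Gamma \subset \OO^+(f, \ZZ)$ for some rational quadratic form $f$ of signature $(n, 1)$, so that the characteristic polynomial of $H$ is integral. Splitting off an obligatory $\pm 1$ eigenvalue when $n$ is even (so the degree $n+1$ is odd) leaves an integer palindromic polynomial of even degree $d_n$, where $d_n = n$ for even $n$ and $d_n = n + 1$ for odd $n$; this is a Salem polynomial, so $\lambda$ is (up to a uniformly bounded many-to-one step related to square-rootability) a Salem number of degree $d_n$.

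For the numerator, I invoke the Prime Geodesic Theorem for finite-volume hyperbolic orbifolds, which provides
\[
\#\Gamma_h(L) \;\geq\; c_1(n) \, \frac{e^{(n-1)L}}{L}
\]
for $L \geq L_0(n, \Gamma)$, with a leading constant independent of $\Gamma$. For the denominator, distinct translation lengths inject into the set of Salem numbers of degree at most $d_n = 2k$, where $k = [(n+1)/2]$, with Mahler measure at most $e^L$. Each corresponding Salem polynomial is monic and palindromic, hence parametrized by $k$ integer coefficients which are bounded polynomially in $e^L$. A careful count of admissible polynomials gives a bound of the form
\[
\#\{ \lambda \text{ Salem of degree } \leq d_n,\; \lambda \leq e^L \} \;\leq\; C \, e^{k L} \cdot L^{\delta_{5,7}(n)}.
\]
Dividing the two estimates and using the identity $(n-1) - k = [n/2] - 1$ yields the claimed lower bound $c \, e^{([n/2] - 1)L} / L^{1 + \delta_{5,7}(n)}$.

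The main obstacle is the Salem-polynomial count with the sharp polynomial factor, and in particular the $L^{\delta_{5,7}(n)}$ loss specifically in dimensions $5$ and $7$. The leading exponential $e^{kL}$ is forced by the $k$-dimensional coefficient lattice and is easy, but obtaining the correct polynomial factor requires exploiting the constraint that the $d_n - 2$ non-real roots lie on the unit circle, which restricts the admissible coefficients to a proper subregion of the trivial box. In the low-degree cases $d_n = 6, 8$ there are too few unit-circle roots for this restriction to be fully effective, producing the $\delta_{5,7}$ correction. As the companion asymptotic count of square-rootable Salem numbers in the second half of the paper confirms, this count is essentially sharp, so the exponent $[n/2] - 1$ is the best multiplicity rate attainable by this strategy. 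A secondary technicality is checking that the loss from the square-rootability step and from the reduction to commensurable integral lattices is uniformly bounded in $H$, so that it is absorbed into the constant $c(n)$ rather than affecting the main term.
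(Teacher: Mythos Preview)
Your overall strategy---Prime Geodesic Theorem for the numerator, Salem-number count for the denominator---is the same as the paper's. The even-dimensional case is essentially right: $e^{\ell(H)}$ is a Salem number of degree at most $n$, and the trivial coefficient-box count already gives $O(e^{nL/2})$ with no polynomial correction needed (so your remark about ``exploiting the unit-circle constraint'' to get the right polynomial factor is beside the point here).

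The odd-dimensional case has a genuine gap. You treat $e^{\ell(H)}$ as a Salem number of degree at most $n+1$ ``up to a uniformly bounded many-to-one step related to square-rootability'', but that is not what Emery--Ratcliffe--Tschantz provides: for odd $n$ it is $e^{2\ell(H)}$ that is the Salem number of degree $\le n+1$, and the decisive input is that it is \emph{square-rootable} over $\QQ$. Square-rootability is not a bounded-to-one adjustment; it is a structural constraint that cuts the count of admissible Salem numbers $\le Q=e^{2L}$ from $O(Q^{(n+1)/2})$ down to $O(Q^{(n+1)/4}\cdot\eta)$---a saving of a full factor $e^{(n+1)L/2}$, without which the denominator bound is worthless. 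This also shows why your explanation of the $\delta_{5,7}$ loss (``too few unit-circle roots'' in degrees $d_n=6,8$) cannot be correct: that reasoning would equally afflict the even dimensions $n=6,8$, where no such loss occurs. In the paper the extra logarithm comes from the square-rootable count (Proposition~\ref{prop:salem_square-rootable}): the polynomial $q$ from Definition~\ref{def:square-rootable} has odd-degree coefficients in $\sqrt{\alpha}\,\ZZ$ rather than $\ZZ$, and summing the resulting lattice-point count over the square-free parameter $\alpha$ produces $\sum_\alpha \alpha^{-\frac{1}{2}[\frac{m+1}{2}]}$, which is the harmonic series exactly when $m=3,4$, i.e.\ when $n=5,7$.
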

	
	Consider a hyperbolic transformation $H$ as a matrix in $\OO^+(n, 1)$. It has eigenvalues $e^{\ell(H)}$ and $e^{-\ell(H)}$, while other its eigenvalues are complex numbers of absolute value one. When $H$ comes from an arithmetic lattice $\Gamma$, one may expect its characteristic polynomial to have, in a sense, ``integral'' coefficients. Thus it is not surprising that our proof of Theorem \ref{thm:multiplicities_alg} makes use of the number-theoretic concept of Salem numbers: a \textit{Salem number} is a real algebraic integer $\l > 1$, such that it is conjugate to $\l^{-1}$ and its remaining conjugates lie on the unit circle. Note that we allow Salem numbers to have degree $2$, to be consistent with \cite{Bel21} and \cite{ERT15}. 
	
	We also need a more technical notion, which was introduced by Emery, Ratcliffe and Tschantz.
	
	\begin{definition}[{\cite[Section 1.3]{ERT15}}]
		\label{def:square-rootable}
		A Salem number $\l$ is called \textit{square-rootable} over a field $K \subset \QQ(\l + \l^{-1})$ if there exist a totally positive $\a \in K$ and a monic palindromic polynomial $q$, such that even degree coefficients of $q$ lie in $K$, odd degree coefficients of $q$ lie in $\sqrt{\a}K$, and
		\[
		q(x)q(-x) = p(x^2), 
		\]
		where $p$ is the minimal polynomial of $\l$ over $K$.
	\end{definition}
	
	The following result of Emery, Ratcliffe and Tschantz \cite{ERT15} formalizes the relation between (square-rootable) Salem numbers and the length spectrum of arithmetic hyperbolic orbifolds. 
	
	\begin{theorem}[{\cite[Theorem 1.1 and Theorem 1.6]{ERT15}}]
		\label{thm:ERT}	
		Suppose $\Gamma \subset \Isom(\HH^n)$ is an arithmetic lattice of the simplest type defined over a totally real number field $K$, $H \in \Gamma$ is a hyperbolic transformation, and $\ell(H)$ is its translation length. If $n$ is even, then $\l_1 = e^{\ell(H)}$ is a Salem number, with $\deg_K(\l_1) \le n+1$. If $n$ is odd, then $\l_2 = e^{2\ell(H)}$ is a Salem number, square-rootable over $K$, with $\deg_K(\l_2) \le n+1$.
	\end{theorem}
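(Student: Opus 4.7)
I begin by unfolding the structure of $\Gamma$. Being of simplest type, $\Gamma$ comes with a quadratic form $q$ on $K^{n+1}$ of signature $(n,1)$ at the identity embedding $\sigma_1 : K \hookrightarrow \RR$ and positive definite at the remaining real embeddings $\sigma_2, \ldots, \sigma_d$, and under an identification $\Isom(\HH^n) \simeq \OO(q)^+(\RR)$ the lattice is commensurable with $\OO(q, \mc{O}_K)^+$. Consequently $H \in \Gamma$ is represented by an element of $\OO(q, K)$, and some power $H^k$ lies in $\OO(q, \mc{O}_K)^+$, so $\l_1^k$, hence $\l_1 := e^{\ell(H)}$, is an algebraic integer.

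Next I would analyze the characteristic polynomial $\chi_H(x) \in K[x]$, monic of degree $n + 1$. The orthogonality relation $H^\top Q H = Q$ (with $Q$ the Gram matrix of $q$) forces the multiset of eigenvalues to be closed under inversion, so $\chi_H$ is reciprocal and $\l_1, \l_1^{-1}$ are both roots; hyperbolicity at $\sigma_1$ places the remaining $n - 1$ eigenvalues on the unit circle. At each non-identity real place, the conjugate $H^{\sigma_i}$ preserves the positive definite form $q^{\sigma_i}$, hence lies in the compact group $\OO(n+1)(\RR)$, and all its eigenvalues have modulus $1$. The $\QQ$-minimal polynomial $m(x) \in \ZZ[x]$ of $\l_1$ divides the rational product $\prod_i \chi_H^{\sigma_i}(x)$, so every root of $m$ lies in $\{\l_1, \l_1^{-1}\} \cup \{|z| = 1\}$. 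A Kronecker-type argument shows $\l_1^{-1}$ must actually appear as a root: otherwise $|m(0)| = \l_1$, forcing $\l_1 \in \ZZ$; but $\l_1^{-1}$, being a rational algebraic integer with $0 < \l_1^{-1} < 1$, cannot be in $\ZZ$, a contradiction. Therefore $\l_1$ is a Salem number with $\deg_K \l_1 \leq \deg \chi_H = n + 1$, settling the even-$n$ statement; squaring, $\l_2 = \l_1^2$ is automatically a Salem number with the same degree bound over $K$.

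The main obstacle, and the only remaining content, is the square-rootability of $\l_2$ in the odd-$n$ case. Let $p(y) \in K[y]$ be the $K$-minimal polynomial of $\l_2$. As the candidate for $q(x)$ in Definition \ref{def:square-rootable} I would take a monic degree-$\deg p$ polynomial whose roots are coherent square roots $\eps_\sigma \sqrt{\l_2^\sigma}$ of the roots of $p$, patterned on the factor of $\chi_H$ carrying $\l_1$; the relation $q(x)q(-x) = p(x^2)$ is then the formal identity
\[
\prod_\sigma (x - \eps_\sigma\sqrt{\l_2^\sigma})(x + \eps_\sigma\sqrt{\l_2^\sigma}) = \prod_\sigma (x^2 - \l_2^\sigma).
\]
The delicate step is to identify the totally positive $\a \in K$ for which the odd-degree coefficients of $q$ land in $\sqrt{\a}\,K$; morally, $\sqrt{\a}$ should generate $K(\l_1)$ over $K(\l_2)$. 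Total positivity of $\a$ is forced by positive-definiteness of $q^{\sigma_i}$ at $i > 1$, which keeps $\l_1^{\sigma_i}$ on the unit circle and pins down the sign of the embeddings of $\sqrt{\a}$; the coefficient splitting is obtained by tracking how $\mathrm{Gal}(\bar\QQ/\QQ)$ permutes the sign choices $\eps_\sigma$ compatibly with the reciprocal structure of $\chi_H$. This Galois bookkeeping, specific to the simplest-type construction in odd dimensions, is the technical heart of the proof.
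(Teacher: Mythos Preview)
The paper does not prove Theorem~\ref{thm:ERT}; it is quoted from \cite[Theorems 1.1 and 1.6]{ERT15} and used as a black box in the proof of Theorem~\ref{thm:multiplicities_alg}. There is therefore no in-paper argument to compare your sketch against.

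That said, your outline matches the broad shape of the argument in \cite{ERT15}: pass to the characteristic polynomial of $H$, use the orthogonality relation together with positive-definiteness at the non-identity places to confine all Galois conjugates of $\l_1$ to $\{\l_1,\l_1^{-1}\}\cup\{|z|=1\}$, and invoke Kronecker to force $\l_1^{-1}$ to appear. Two points deserve a flag. First, commensurability of $M\Gamma M^{-1}$ with $\OO'(f,\mf{o}_K)$ does not by itself place $H$ inside $\OO(f,K)$, so your assertion that $\chi_H\in K[x]$ (and hence the degree bound $\deg_K\l_1\le n+1$) needs an extra step; in \cite{ERT15} this is handled through the description of the commensurator of $\OO'(f,\mf{o}_K)$. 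Second, and more substantively, your treatment of square-rootability in the odd-dimensional case remains a heuristic. You correctly isolate it as the technical heart, but identifying the totally positive $\a\in K$ and verifying that the odd-degree coefficients of your candidate $q$ genuinely land in $\sqrt{\a}\,K$ is exactly where \cite{ERT15} does nontrivial work, and the ``Galois bookkeeping'' you gesture at is not supplied. As written, this part is a plausible plan rather than a proof.
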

	
	Since in our setting the lattice $\Gamma$ is non-cocompact, it is thus of the simplest type defined over $K = \QQ$ (see \cite[Sections 1-2]{LM93}).	
	
	\medskip
	
	As Theorem \ref{thm:ERT} suggests, estimates on the order of growth of (square-rootable) Salem numbers are an important ingredient in our proof of Theorem \ref{thm:multiplicities_alg}. More precisely, our argument relies on the following two propositions.
	
	\begin{proposition}
		\label{prop:salem_all}
		Denote by $\Sal_m(Q)$ the set of Salem numbers of degree $2m$ lying in the interval $(1, Q]$.
		Then for a fixed $m \in \NN$ and $Q \to \infty$,
		\[
		\#\Sal_m(Q) = O(Q^m).
		\]
	\end{proposition}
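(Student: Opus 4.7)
The plan is to parametrize Salem polynomials of degree $2m$ by auxiliary degree-$m$ integer polynomials via the substitution $y = x + x^{-1}$, and then to count the resulting lattice points.

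Concretely, fix a Salem number $\l \in (1, Q]$ of degree $2m$ and let $p(x) \in \ZZ[x]$ be its minimal polynomial. Because $\l^{-1}$ is a Galois conjugate of $\l$, the polynomial $p$ is monic and palindromic of degree $2m$, so one can write $p(x) = x^m r(x + x^{-1})$ for a unique monic $r \in \ZZ[y]$ of degree $m$; integrality of $r$ follows inductively from the recurrence $x^k + x^{-k} = y(x^{k-1} + x^{-(k-1)}) - (x^{k-2} + x^{-(k-2)})$. The $2m$ roots of $p$ split into the reciprocal pair $\{\l, \l^{-1}\}$ and $m-1$ complex conjugate pairs $\{e^{i\theta_j}, e^{-i\theta_j}\}$ on the unit circle, so the roots of $r$ consist of one real number $\l + \l^{-1} \in (2, Q+1]$ together with $m-1$ real numbers $2\cos\theta_j \in (-2, 2)$.

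Writing $r(y) = y^m + b_1 y^{m-1} + \dots + b_m$, Vieta's formulas combined with these root bounds give, for every $1 \le k \le m$,
\[
|b_k| \le \binom{m-1}{k-1}(Q+1)\, 2^{k-1} + \binom{m-1}{k}\, 2^k = O_m(Q),
\]
so the integer tuple $(b_1, \dots, b_m)$ ranges over a set of cardinality $O_m(Q^m)$. Since this tuple determines $r$, hence $p$, hence $\l$ (as the unique real root of $p$ exceeding $1$), the assignment $\l \mapsto (b_1, \dots, b_m)$ is injective, which yields $\#\Sal_m(Q) = O_m(Q^m)$. The only step that deserves some care is the integrality and uniqueness of the correspondence $p \leftrightarrow r$, both of which are standard features of palindromic polynomials; the coefficient estimate is then an immediate consequence of Vieta once the roots of $r$ are localized in $(-2, 2) \cup (2, Q+1]$, so I do not anticipate a genuine obstacle.
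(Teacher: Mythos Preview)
Your proof is correct and follows essentially the same strategy as the paper: bound the $m$ free integer parameters of the minimal polynomial via Vieta's formulas and count the resulting lattice points. The only cosmetic difference is that you first pass to the trace polynomial $r(y)$ via $y = x + x^{-1}$, whereas the paper works directly with the $m$ independent coefficients $p_1,\ldots,p_m$ of the palindromic polynomial $p$ and bounds them by $|p_{2m-k}| \le \binom{2m}{k}Q$.
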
	
	
	\begin{proposition}
		\label{prop:salem_square-rootable}
		Denote by $\Sal_m^{sq}(Q)$ the set of square-rootable (over $\QQ$) Salem numbers of degree $2m$ lying in the interval $(1, Q]$. 
		Then for a fixed $m \in \NN$ and $Q \to \infty$,
		\[
		\#\Sal_m^{sq}(Q) = O(Q^{m/2} \cdot \eta(Q, m)), \text{ where }
		\]
		\[
		\eta(Q, m) = \begin{cases} Q^{1/2}, & \text{ if } m = 1, 2; \\ \log(Q), & \text{ if } m = 3, 4; \\
			1, & \text{ if } m \ge 5. \end{cases}			
		\]
	\end{proposition}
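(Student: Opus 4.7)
The plan is to upper-bound $\#\Sal_m^{sq}(Q)$ by counting admissible integer triples $(\a, r, s)$ that parameterize square-rootable Salem polynomials of degree $2m$, adapting the strategy of Götze and Gusakova.

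First, I will set up an integer parameterization. Given a square-rootable Salem number $\l \in (1, Q]$ of degree $2m$ with minimal polynomial $p \in \ZZ[y]$, Definition~\ref{def:square-rootable} furnishes a monic palindromic polynomial $q(x) = r(x^2) + \sqrt\a\cdot x\cdot s(x^2)$ of degree $2m$ over $\QQ(\sqrt\a)$ with $q(x)q(-x) = p(x^2)$, equivalently $p(y) = r(y)^2 - \a\, y\, s(y)^2$. I will absorb any square factor of $\a$ into $s$ to assume $\a$ is a squarefree positive integer. Since the roots of $q$ are square roots of roots of $p$ (hence algebraic integers) and $q$ is monic, its coefficients lie in $\mathcal{O}_{\QQ(\sqrt\a)}$, which yields $r, s \in \ZZ[y]$. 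Palindromicity of $q$ forces $r$ to be monic palindromic of degree $m$ (so automatically $r(0) = 1$) and $s$ to be palindromic of degree at most $m - 1$.

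Next, I will bound the coefficients of $r$ and $s$ via the Mahler measure. The structural condition on $q$ in Definition~\ref{def:square-rootable} is equivalent to the Galois conjugate $q^\sigma$ (under $\sqrt\a \mapsto -\sqrt\a$) coinciding with $q(-x)$, so $q^\sigma$ has the same Mahler measure as $q$. Since $\sqrt\l$ is the unique root of $q$ outside the closed unit disk (the rest being $1/\sqrt\l$ and points on the unit circle), $M(q) = M(q^\sigma) = \sqrt\l \le Q^{1/2}$. Writing $q_i = c_i + \sqrt\a\, d_i$ with $c_i, d_i \in \ZZ$, the classical coefficient bound $|q_i|, |q_i^\sigma| \le \binom{2m}{i} M(q)$ combined with the triangle inequality yields $|c_i| \ll_m Q^{1/2}$ and $\sqrt\a\,|d_i| \ll_m Q^{1/2}$. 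Since $d_i = 0$ for even $i$ and $c_i = 0$ for odd $i$, this reads $|r_j| \ll_m Q^{1/2}$ and $|s_j| \ll_m (Q/\a)^{1/2}$.

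A direct lattice-point count then finishes the proof. The numbers of independent coefficients of palindromic $r$ (with $r(0) = 1$) and palindromic $s$ are $\lfloor m/2 \rfloor$ and $\lceil m/2 \rceil$ respectively, summing to $m$. For each squarefree $\a$, the count of admissible $(r, s)$ is thus $O_m\bigl(Q^{m/2}\, \a^{-\lceil m/2 \rceil / 2}\bigr)$. Since $p$ is irreducible, $s \ne 0$, so some $|s_j| \ge 1$, forcing $\a \ll_m Q$. Summing $\sum_\a \a^{-\lceil m/2 \rceil / 2}$ over squarefree $\a \le CQ$ gives $O(1)$ when $\lceil m/2 \rceil \ge 3$ (that is, $m \ge 5$), $O(\log Q)$ when $\lceil m/2 \rceil = 2$ ($m = 3, 4$), and $O(Q^{1/2})$ when $\lceil m/2 \rceil = 1$ ($m = 1, 2$), precisely matching the three regimes of $\eta(Q, m)$.

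The key insight, and the step I expect to be the most delicate, is recognizing in the second step that the symmetry condition on $q$ is exactly the identity $q^\sigma = q(-x)$, which is what allows one to bound $|c_i|$ and $\sqrt\a\,|d_i|$ independently and produce the asymmetric estimates $|r_j| \ll Q^{1/2}$ versus $|s_j| \ll (Q/\a)^{1/2}$. A smaller point is verifying integrality of $d_j$ when $\a \equiv 1 \pmod 4$: a short calculation shows that $\sqrt\a\, d \in \mathcal{O}_{\QQ(\sqrt\a)} = \ZZ[(1+\sqrt\a)/2]$ forces $d \in \ZZ$. Since we only need an upper bound, the many-to-one nature of the parameterization $p \leftrightarrow (\a, r, s)$ costs us nothing.
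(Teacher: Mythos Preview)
Your argument is correct and essentially identical to the paper's: both bound the coefficients of $q$ by $O(Q^{1/2})$ via the root locations, deduce that the odd-indexed coefficients lie in $\sqrt{\a}\,\ZZ$ (hence each has $O(Q^{1/2}/\sqrt{\a})$ choices), and then sum $\a^{-\lceil m/2\rceil/2}$ over squarefree $\a \le CQ$ to produce the three regimes of $\eta(Q,m)$. Your Galois-conjugate observation $q^{\sigma}(x)=q(-x)$ is correct but not strictly needed, since by hypothesis the odd coefficients already lie purely in $\sqrt{\a}\,\QQ$ and the even ones purely in $\QQ$, so the single bound $|q_i|\le \binom{2m}{i}Q^{1/2}$ suffices directly (this is how the paper phrases it).
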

	
	Proposition \ref{prop:salem_all} is not novel: theorem of G\"otze and Gusakova \cite[Theorem 1.1]{GG19} even provides an asymptotic formula for the number of all Salem numbers of given degree on the initial segment of the real line
	\begin{equation}
		\label{eq:GG-introduction}
		\#\Sal_m(Q) = w_{m-1}Q^m + O(Q^{m-1}).
	\end{equation}
	Proposition \ref{prop:salem_square-rootable} is new, but it can be also deduced from our more technical Theorem \ref{thm:asymptotics}, which is discussed later in the introduction. Nevertheless, in Section \ref{sec:proof_main} we present short and elementary proofs of these propositions in order to keep our proof of Theorem \ref{thm:multiplicities_alg} more simple and self-contained.
	
	\medskip
	
	The asymptotic formula \eqref{eq:GG-introduction} of G\"otze and Gusakova also implies that the bound from Proposition \ref{prop:salem_all} is essentially optimal. It turns out that a similar statement holds for Proposition \ref{prop:salem_square-rootable} as well. It is not hard to check that every Salem number of degree $2$ is square-rootable (over $\QQ$), and thus
	\[
	\#\Sal_1^{sq}(Q) = Q + O(1).
	\]
	Belolipetsky, Lalín, Murillo and Thompson \cite[Theorem 2]{Bel21} obtained an asymptotic formula for square-rootable (over $\QQ$) Salem numbers of degree $4$
	\[
	\#\Sal_2^{sq}(Q) = \frac{4}{3} Q^{3/2} + O(Q).
	\]		
	We extend their result to larger degrees, modifying the approach of G\"otze and Gusakova. Namely, in Section \ref{sec:GG-modification} we prove the following Theorem \ref{thm:asymptotics}, which determines the cardinality of the set $\Sal_m^{sq}(Q)$ up to $1 + o(1)$ factor when $m$ is odd, and up to a multiplicative constant factor when $m$ is even.
	
	\begin{theorem}		
		\label{thm:asymptotics}
		Denote by $\Sal_m^{sq}(Q)$ the set of square-rootable (over $\QQ$) Salem numbers of degree $2m$ lying in the interval $(1, Q]$. Then for fixed $m \ge 3$ and $Q \to \infty$,
		\begin{enumerate}			
			\item[(1)] if $m = 3$, then
			\[
			\#\Sal_m^{sq}(Q) = w_{m-1} \frac{6}{\pi^2} Q^{m/2} \log(Q) + O(Q^{m/2}); 
			\]
			\item[(2)] if $m = 4$, then
			\[
			\#\Sal_m^{sq}(Q) \ge \frac{1}{2^{2m}} \cdot w_{m-1} \frac{6}{\pi^2} Q^{m/2} \log(Q) + O(Q^{m/2}),
			\]
			\[
			\#\Sal_m^{sq}(Q) \le w_{m-1} \frac{6}{\pi^2} Q^{m/2} \log(Q) + O(Q^{m/2});
			\]
			\item[(3)] if $m \ge 5$ and $m$ is odd, then
			\[
			\#\Sal_m^{sq}(Q) = w_{m-1} \frac{\zeta(
				\frac{m+1}{4})}{\zeta(\frac{m+1}{2})} Q^{m/2} + O(Q^{(m-1)/2} \log Q);
			\]
			\item[(4)] if $m \ge 5$ and $m$ is even, then
			\[
			\#\Sal_m^{sq}(Q) \ge \frac{1}{2^{2m}} \cdot w_{m-1} \frac{\zeta(\frac{m}{4})}{\zeta(\frac{m}{2})} Q^{m/2}  + O(Q^{(m-1)/2} \log Q),
			\]
			\[
			\#\Sal_m^{sq}(Q) \le w_{m-1} \frac{\zeta(\frac{m}{4})}{\zeta(\frac{m}{2})} Q^{m/2}  + O(Q^{(m-1)/2} \log Q). 
			\]
		\end{enumerate}
		Here $w_m$ are explicit constants defined by the formula		
		\begin{equation}
			\label{eq:w_m}
			w_m = \frac{2^{m(m+1)}}{m+1}\prod_{k = 0}^{m-1} \frac{(k!)^2}{(2k+1)!},
		\end{equation}
		and $\zeta$ is the Riemann zeta function.		
	\end{theorem}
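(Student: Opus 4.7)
My plan is to adapt the approach of G\"otze and Gusakova \cite{GG19}, enforcing the square-rootability constraint. Via the standard substitution $z = y + 1/y$, each Salem number $\lambda \in (1,Q]$ of degree $2m$ corresponds to its trace polynomial $P(z) \in \ZZ[z]$, monic of degree $m$ with one root $u = \lambda + 1/\lambda \in (2, Q+1/Q]$ and $m-1$ roots in $(-2,2)$. The first step is to translate Definition \ref{def:square-rootable}: analyzing the parity of the coefficients of $q$ and using that a reciprocal polynomial of odd degree is divisible by $y+1$, the square-rootability of $\lambda$ over $\QQ$ becomes equivalent to the existence of a decomposition
\[
P(z) = \begin{cases} (z+2)\tilde R(z)^2 - \alpha S(z)^2, & m \text{ odd}, \\ R(z)^2 - \alpha(z+2)\tilde S(z)^2, & m \text{ even}, \end{cases}
\]
where $\alpha$ is a positive squarefree integer and the polynomials in $\ZZ[z]$ have the appropriate degrees ($\tilde R$ and $R$ are monic of degrees $(m-1)/2$ and $m/2$ respectively). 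The integer $\alpha$ is essentially pinned down by $\lambda$: for odd $m$, setting $z=-2$ yields $-P(-2) = (u+2)\prod_j(v_j+2) = \alpha\, S(-2)^2$, so $\alpha$ is the squarefree part of $-P(-2)$; a parallel statement holds for even $m$.

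The second step is, for each fixed squarefree $\alpha \ge 1$, to count integer tuples $(\tilde R, S)$ (resp.\ $(R, \tilde S)$) -- a total of $m$ integer coefficients -- producing a Salem trace polynomial with $\lambda \le Q$. From the size constraints $\sigma_k(P) = O(Q)$ together with the explicit expression of the coefficients of $P$ as quadratic functions of $(\tilde R, S, \alpha)$, one deduces that the coefficients appearing without an $\alpha$ factor (those of $\tilde R$ or $R$) are of size $O(\sqrt{Q})$, while those appearing with an $\alpha$ factor (those of $S$ or $\tilde S$) are of size $O(\sqrt{Q/\alpha})$. Following \cite{GG19}, I then perform the change of variables $(\tilde R, S) \mapsto (u, v_1, \ldots, v_{m-1})$ via the composition $(\tilde R, S) \mapsto (\text{coefs of }P) \mapsto (\text{roots})$, and express the count as a volume integral. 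The Vandermonde part of the Jacobian is the same as in \cite{GG19} and produces the constant $w_{m-1}$, whereas the new quadratic part scales the volume by $\alpha^{-s_m}$, with $s_m = (m+1)/4$ for $m$ odd and $s_m = m/4$ for $m$ even (half the number of free coefficients of the polynomial multiplied by $\alpha$). Hence, for fixed $\alpha$, the count is asymptotic to $w_{m-1}\, \alpha^{-s_m}\, Q^{m/2}$.

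Finally I sum over squarefree $\alpha$. For $m \ge 5$ one has $s_m > 1$, so $\sum_{\alpha \text{ sqfree}} \alpha^{-s_m} = \zeta(s_m)/\zeta(2 s_m)$ converges and yields exactly the $\zeta$-factor in the statement. For $m \in \{3,4\}$ one has $s_m = 1$, and truncation at the natural cutoff $\alpha = O(Q)$ (enforced by the size constraints on the parameters) gives $\sum_{\alpha \le Q,\,\text{sqfree}} 1/\alpha \sim (6/\pi^2)\log Q$, producing the logarithmic factor.

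The main technical obstacle is the explicit Jacobian computation for the quadratic map $(\tilde R, S) \mapsto (\text{coefs of }P)$ and the justification of the passage from a lattice-point count to the volume integral, at the level of the claimed error terms $O(Q^{m/2})$ (for $m \in \{3,4\}$) and $O(Q^{(m-1)/2}\log Q)$ (for $m \ge 5$); this is where the G\"otze-Gusakova error analysis has to be verified in the present setting. A secondary subtlety, specific to $m$ even, is that the decomposition $(R, \tilde S)$ representing a given Salem number is not unique and, while of bounded multiplicity, it is not clear how to pick a canonical representative; the upper bound is obtained by counting all pairs $(R,\tilde S)$ (each Salem number giving at least one), while the lower bound is obtained by restricting to a range of parameters where a single representative per Salem number can be selected, incurring the factor $2^{-2m}$.
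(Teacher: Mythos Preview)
Your overall strategy---parametrise square-rootable Salem numbers by a squarefree integer $\alpha$ together with auxiliary integer data, count for each fixed $\alpha$, then sum over $\alpha$---is exactly the paper's. The difference is in the choice of coordinates, and here the paper takes a shorter path. It works directly with the polynomial $q$ of Definition~\ref{def:square-rootable}: for fixed $\alpha$ the condition ``even-degree coefficients in $\ZZ$, odd-degree coefficients in $\sqrt\alpha\,\ZZ$'' is simply membership in the lattice $\Lambda_\alpha=\sqrt{\alpha}\,\ZZ e_1\oplus\ZZ e_2\oplus\sqrt{\alpha}\,\ZZ e_3\oplus\cdots\subset\RR^m$, while the Salem-type root constraints cut out precisely the G\"otze--Gusakova region $V_m(Q^{1/2})$, which is \emph{independent of $\alpha$}. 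After removing finitely many hyperplanes to exclude roots of unity (a step you do not mention), Widmer's lattice-point theorem together with the known volume $w_{m-1}Q^{m/2}+O(Q^{(m-1)/2})$ yields the per-$\alpha$ count immediately, with $\det\Lambda_\alpha=\alpha^{[(m+1)/2]/2}$ giving exactly your exponent $s_m$. Your route via the trace polynomial $P$ and the quadratic map $(\tilde R,S)\mapsto P$ is a detour: the Jacobian of that map is non-constant, and the computation becomes tractable only once one observes that it factors through the \emph{linear} map $(\tilde R,S)\mapsto(\text{coefficients of }q)$, since your pair $(\tilde R,S)$ is just a linear reparametrisation of the even and odd parts of $q$. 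In short, the ``Jacobian of the quadratic part'' you flag as the main technical obstacle disappears entirely if one stays in $q$-coordinates.

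On injectivity there are two issues. First, your observation that for odd $m$ the value of $\alpha$ is the squarefree part of $-P(-2)$ is correct and is a neat alternative to part of the paper's argument, but it only pins down $\alpha$, not the pair $(\tilde R,S)$; the paper still needs a separate field-extension argument (using that $m$ is odd) to conclude that $q$ itself is unique. Second, and more seriously, your claim that ``a parallel statement holds for even $m$'' is false: substituting $z=-2$ into $P=R^2-\alpha(z+2)\tilde S^2$ gives only $P(-2)=R(-2)^2$, which carries no information about $\alpha$, and indeed the paper exhibits a degree-$8$ Salem number admitting decompositions with four distinct values $\alpha\in\{2,6,26,78\}$. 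This non-uniqueness of $\alpha$ itself, not merely of $(R,\tilde S)$, is precisely why the even-$m$ cases yield only two-sided inequalities.
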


		%
	
	Theorem \ref{thm:asymptotics} is a purely number-theoretic statement. From geometric perspective, the main motivation to prove it is to ensure that Proposition \ref{prop:salem_square-rootable} is essentially tight, and therefore one can not obtain a better bound in Theorem \ref{thm:multiplicities_alg} with our approach. Also this theorem may be used to write down an explicit expression for the constant $c(n)$ in Theorem \ref{thm:multiplicities_alg}, see Remark \ref{remark:explicit_constant}.
	
	\bigskip
	
	\noindent\textbf{Acknowledgements.} I am grateful to my advisor Mikhail Belolipetsky for bringing my attention to this problem and helpful comments that have significantly improved this note. I am also thankful to the anonymous referees for their careful reading and valuable suggestions. 
	This study was financed in part by the Coordenação de Aperfeiçoamento de Pessoal de Nível Superior, Brasil (CAPES).
		
	\section{Notation and preliminaries}
	
	\subsection{Salem numbers} A polynomial $p$ is called palindromic if it satisfies $p(x) = x^{\deg p} p(x^{-1})$. It is not hard to prove that the minimal polynomial of a Salem number has to be palindromic of even degree. See \cite{Smyth15} for a general survey on Salem numbers.
	
	\medskip
	
	\noindent Notation $\AI$ is used for the ring of algebraic integers over $\QQ$. We denote by $\Sal_m \subset \AI$ the set of Salem numbers of degree $2m$, and let
	\[
	\Sal_m(Q) = \Sal_m \;\cap\; (1, Q].
	\]
	Similarly, we denote by $\Sal^{sq}_m \subset \AI$ the set of square-rootable (over $\QQ$) Salem numbers of degree $2m$, and let
	\[
	\Sal_m^{sq}(Q) = \Sal_m^{sq} \;\cap\; (1, Q].
	\]
	
	\subsection{Asymptotic behavior of functions} For real-valued functions $f(x)$ and $g(x)$ we use the following standard notation to compare their asymptotic behavior as $x \to \infty$.
	
	\begin{itemize}
		\item $f(x) = O(g(x))$ if $|f(x)| \le C g(x)$ for some constant $C > 0$;
		\item $f(x) = o(g(x))$ if for any $\eps > 0$ there exists $N = N(\eps)$ such that $|f(x)| \le \eps g(x)$ for $x \ge N$;
		\item $f(x) \sim g(x)$ if $f(x) - g(x) = o(g(x))$.
	\end{itemize}
	Here we emphasize that implicit constants $C$ and $N = N(\eps)$ \textbf{are allowed} to depend additionally on the dimension of the manifold (usually denoted by $n$) or degree of the Salem number (usually denoted by $2m$), but not on anything else.
	
	\medskip
	
	For $x \in \RR$ we use $\log (x)$ for the natural (base $e$) logarithm of $x$, and $[x]$ for the largest integer not exceeding $x$. We also write $\# X$ for the cardinality of a set $X$. 
	
	\subsection{Arithmetic orbifolds of the simplest type} To give the definitions, we follow the exposition of \cite[Section 2.3]{ERT15} (see also \cite[Section 12.8]{Ratcliffe19} for more details). A quadratic form $f$ in $n+1$ variables over a totally real number field $K$ is called \textit{admissible} if it has signature $(n, 1)$, and for each non-trivial embedding $\sigma: K \to \RR$ corresponding quadratic form $f^\sigma$ over $\sigma(K)$ is positive definite. For a subring $R \subset \RR$ denote by $\OO(f, R)$ the group of matrices preserving this quadratic form
	\[
	\OO(f, R) = \{T \in \GL(n+1, R) \mid f(Tx) = f(x) \text{ for all } x \in \RR^{n+1}\},
	\]
	and let $\OO'(f, R)$ be its subgroup, also keeping both connected components of the negative cone $\{ x \in \RR^{n+1} \mid f(x) < 0\}$ invariant. 
	
	Let $f_n$ be the standard Lorentzian form in $n+1$ variables
	\[
	f_n(x) = x_1^2 + \ldots + x_n^2 - x_{n+1}^2.
	\]	
	A subgroup $\Gamma \subset \Isom(\HH^n)$ is called an \textit{arithmetic subgroup of the simplest type defined over $K$} if there exist $M \in \GL(n+1, \RR)$ and an admissible quadratic form $f$ over $K$ in $n+1$ variables such that
	\begin{itemize}
		\item $f(Mx) = f_n(x)$ for any $x \in \RR^{n+1}$;
		\item subgroups $M \Gamma M^{-1}$ and $\OO'(f, \mathfrak{o}_K)$ of $\OO'(f, \RR)$, where $\mf{o}_K$ is the ring of integers of $K$, are commensurable (that is, their intersection has finite index in both of them).
	\end{itemize}
	
	Such $\Gamma$ is always a discrete subgroup of $\Isom(\HH^n)$ of finite covolume. The hyperbolic orbifold $\HH^n/\Gamma$ in this case is also called \textit{arithmetic of the simplest type defined over $K$}.	
	
	As mentioned in the introduction, for the case of non-cocompact arithmetic lattices this construction is actually exhaustive.
	
	\begin{proposition}[{\cite[Sections 1-2]{LM93}}]
		\label{prop:K=Q}
		Any non-cocompact arithmetic lattice in $\Isom(\HH^n)$ is of the simplest type defined over $\QQ$.
	\end{proposition}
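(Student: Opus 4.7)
The plan is to combine Margulis arithmeticity (plus the classification of $K$-forms with real points $\OO^+(n,1)$) with the Godement--Borel--Harish-Chandra compactness criterion, and rule out every arithmetic construction except simplest type over $\QQ$.

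First I would invoke Margulis arithmeticity (applicable since $\OO^+(n,1)$ is a simple Lie group of real rank one for $n \ge 2$) to write any arithmetic lattice $\Gamma \subset \Isom(\HH^n)$, up to commensurability, as the integer points $\mathbf{G}(\mf{o}_K)$ of an absolutely almost simple algebraic group $\mathbf{G}$ over a totally real number field $K$, with $\mathbf{G}(\RR)$ isogenous to $\OO^+(n,1)$ times a compact factor at each non-identity real place of $K$. The classification of such $K$-forms yields three families: (a) orthogonal groups of admissible quadratic forms of signature $(n,1)$ in $n+1$ variables over $K$, i.e.\ the simplest type; (b) special unitary groups $\mathrm{SU}(h,D)$ of a Hermitian form $h$ over a quaternion division algebra $D/K$; and (c) exceptional triality forms, which occur only when $n=7$.

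Next, the Godement compactness criterion states that $\mathbf{G}(\mf{o}_K)$ is non-cocompact if and only if $\mathbf{G}$ is $K$-isotropic, equivalently contains a nontrivial $K$-rational unipotent element. In case (b), admissibility forces $D$ to be ramified at every non-identity real place; combined with the need for a $K$-rational parabolic at the identity place, this yields a local--global contradiction (essentially via the Hasse principle for the underlying quaternion algebra), so $\mathrm{SU}(h,D)$ is always $K$-anisotropic, and analogous reasoning on the relevant composition algebra handles the triality case (c). Hence a non-cocompact $\Gamma$ must be of simplest type, associated to some admissible quadratic form $f$ over $K$.

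Finally I would show $K = \QQ$. Non-cocompactness of $\OO'(f, \mf{o}_K)$ is equivalent, again by the compactness criterion, to $f$ being $K$-isotropic: there exists $0 \ne v \in K^{n+1}$ with $f(v) = 0$. Suppose for contradiction that $[K : \QQ] > 1$ and pick a non-identity embedding $\sigma \colon K \to \RR$. Admissibility gives that $f^\sigma$ is positive definite on $\RR^{n+1}$, yet $f^\sigma(\sigma(v)) = \sigma(f(v)) = 0$ while $\sigma(v) \ne 0$ by injectivity of $\sigma$, contradicting positive definiteness. Thus $K = \QQ$. The main obstacle is the middle step — ruling out the quaternionic and triality families — which is precisely what Li and Millson treat in detail in \cite{LM93} using the Galois-cohomological classification of classical groups over number fields; once the simplest type is secured, the reduction to $K = \QQ$ is the one-line positivity argument above.
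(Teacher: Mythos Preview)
The paper does not give its own proof of this proposition; it is stated with a bare citation to \cite{LM93} and used as a black box. So there is no in-paper argument to compare against, and your outline is in fact \emph{more} than what the paper supplies. Your three-step plan (classify the admissible $K$-forms of the relevant algebraic group, use the Godement compactness criterion to force the simplest type, then use positivity at the non-identity places to force $K=\QQ$) is the standard route and matches what Li--Millson do.

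One correction worth making: your first step mislabels the tool. Margulis arithmeticity is the theorem that irreducible lattices in semisimple groups of real rank $\ge 2$ are arithmetic; it does \emph{not} apply to $\OO^+(n,1)$, which has real rank one (and indeed admits non-arithmetic lattices). You do not need it anyway, since arithmeticity of $\Gamma$ is a hypothesis. What you actually invoke is the \emph{definition} of an arithmetic subgroup --- commensurability with $\mathbf{G}(\mf{o}_K)$ for some absolutely almost simple $K$-group $\mathbf{G}$ whose real points at the identity place are isogenous to $\OO^+(n,1)$ and compact at the other real places --- together with the Galois-cohomological classification of such $K$-forms. With that wording fixed, the remainder of your sketch is sound: the anisotropy of the quaternionic and triality forms is the substantive input from \cite{LM93}, and your final paragraph deducing $K=\QQ$ from isotropy of $f$ and positive definiteness of $f^\sigma$ is the correct one-line argument.
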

	
	This allows to carry out our analysis only for the case $K = \QQ$. In particular, for the remaining part of the note square-rootable Salem numbers are assumed to be square-rootable over $\QQ$ in the sense of Definition \ref{def:square-rootable}.

	\subsection{Closed geodesics in orbifolds, and their relation to conjugacy classes} \label{sec:bijection}
	Let $\Gamma$ be a lattice in $\Isom(\HH^n)$. A map $\gamma: \RR \to \mc{O} = \HH^n/\Gamma$ is called a \textit{closed geodesic of length $L$}
	\footnote{Formally, a closed geodesic is a pair $([\gamma], L)$, where $[\gamma]$ is the equivalence class of $\gamma$ up to shifts of argument, and $L$ is its period. In other words, a closed geodesic ``remembers'' its length and orientation, but not its starting point.}
	if it is periodic with period $L$ and lifts to an infinite geodesic $\tilde \gamma: \RR \to \HH^n$, satisfying
	\[
	\pi(\tilde \gamma(t)) = \gamma(t), \text{ where $\pi$ is the projection $\HH^n \to \HH^n/\Gamma$. }
	\]
	When $\mc{O}$ is a manifold (in other words, when $\Gamma$ is torsion-free), this definition coincides with the classical definition of a geodesic as a local isometry. 
	In general, however, a closed geodesic can fail to be locally distance-minimizing around singular points of the orbifold (hence some literature uses the term \textit{orbifold geodesic} for this notion).
	
	\medskip
	
	Now we define and analyze the map $\mc{F}$ from the set of hyperbolic conjugacy classes in $\Gamma$ to the set of closed geodesics in $\HH^n/\Gamma$, adapting the proof of \cite[Theorem 9.6.2]{Ratcliffe19} to the setting of orbifolds.	While the content of this section seems to be well-known, we were not able to find an explicit reference to it.
	
	\medskip
	
	Let $\b$ be the axis of a hyperbolic transformation $H \in \Gamma$. Define $\mc{F}([H])$ as projection of this axis to the quotient: the closed geodesic $\gamma: t \mapsto \pi(\b(t))$ of length $\ell(H)$.
	Since for any $g \in \Gamma$ the axis $g \b$ of hyperbolic transformation $g H g^{-1}$ projects to the same closed geodesic, this map is well-defined. Next we verify the following properties of the map $\mc{F}$:
	
	\begin{enumerate}
		\item[1.] $\mc{F}$ is surjective;
		
		\medskip
		
		Consider a closed geodesic $\gamma$ of length $L$, and its lift $\tilde \gamma$. Since $\gamma(0) = \gamma(L)$, there exists an element $g_0 \in \Gamma$, such that $\tilde \gamma(0) = g_0 \tilde \gamma(L)$. Define another geodesic in $\HH^n$ as
		\[
		\tilde \gamma'(t) = g_0 \tilde \gamma(t + L) \text{ for any } t \in \RR.
		\]
		It also projects to $\gamma$ under $\pi$, and agrees with $\tilde \gamma$ at $t = 0$: $\tilde \gamma(0) = \tilde \gamma'(0) = x$.
		Then a version of the unique lifting property \cite[Theorem 13.1.6]{Ratcliffe19} implies that $\tilde \gamma' = u \tilde \gamma$ for some elliptic transformation
		\footnote{An isometry $u$ of $\HH^n$ is called an \textit{elliptic transformation} if its action on $\HH^n$ has a fixed point. When $u$ lies in a discrete subgroup of $\Isom(\HH^n)$, it is elliptic if and only if it has finite order (see \cite[Section 5.5]{Ratcliffe19}).} 
		$u \in \Gamma$, fixing the point $x$. Thus 
		\[
		u \tilde \gamma(t) = g_0 \tilde \gamma(t + L),
		\]
		and $g = g_0^{-1} u \in \Gamma$ is a hyperbolic transformation acting on $\tilde \gamma$ as a translation by $L$. Therefore, $\mc{F}(g) = \gamma$.
		
		\medskip
		
		\item[2.] If $\Gamma$ is torsion-free, then $\mc{F}$ is injective;
		
		\medskip
		
		Consider two hyperbolic transformations $H_1, H_2 \in \Gamma$, and suppose that their axes $\gamma_1, \gamma_2$ project to the same closed geodesic of length $L$ in the quotient $\HH^n/\Gamma$. Then $\gamma_1(0) = g \gamma_2(0)$ for some $g \in \Gamma$. Since $\Gamma$ is torsion-free, we can use the usual unique lifting property of geodesics to conclude that $\gamma_1 = g \gamma_2$. Then both $H_1$ and $g H_2 g^{-1}$ act on the geodesic $\gamma_1$ as translations by $L$. Therefore, $s = H_1^{-1} gH_2g^{-1} \in \Gamma$ is an elliptic element of $\Gamma$, which fixes each point of this geodesic. But $\Gamma$ is torsion-free, thus $s = 1$, and $H_1$ is conjugate to $H_2$.
		
		\medskip
		
		\item[3.] In general, $\# \mc{F}^{-1}(\gamma) \le C(\Gamma)$ for any closed geodesic $\gamma$;
		
		\medskip
		
		Similarly, consider two hyperbolic transformations $H_1, H_2 \in \Gamma$, and suppose that their axes $\gamma_1, \gamma_2$ project to the same closed geodesic of length $L$ in the quotient $\HH^n/\Gamma$. Then $\gamma_1(0) = g_0 \gamma_2(0)$ for some $g_0 \in \Gamma$. By a version of the unique lifting property \cite[Theorem 13.1.6]{Ratcliffe19}, we conclude that $\gamma_1 = u g_0 \gamma_2$ for some elliptic transformation $u \in \Gamma$. 
		
		So, $\gamma_1 = g \gamma_2$ for $g = u g_0 \in \Gamma$. Then both $H_1$ and $g H_2 g^{-1}$ act on the geodesic $\gamma_1$ as translations by $L$. Therefore, $s = H_1^{-1} gH_2g^{-1}$ fixes each point of this geodesic. So, $H_2$ lies in the conjugacy class of $H_1 s$ for some $s \in S$, where $S$ is the subgroup of $\Gamma$, fixing the geodesic $\gamma_1$.
		
		By Selberg's lemma $\Gamma$ has a torsion-free normal subgroup $\Gamma_0$ of finite index. Then $\Gamma_0$ acts on $\HH^n$ freely, thus the projection map $S \to \Gamma/\Gamma_0$ has to be injective, and the size of $S$ has to be bounded above by $[\Gamma:\Gamma_0]$. So, preimage of each closed geodesic with respect to $\mc{F}$ contains at most $C(\Gamma) = [\Gamma:\Gamma_0]$ distinct conjugacy classes.		
	\end{enumerate}
	
	\medskip
	
	When speaking about length spectrum, we consider only primitive closed geodesics: a closed geodesic $\gamma: \RR \to \HH^n/\Gamma$ of length $L$ is called \textit{primitive} if $L$ is its minimal period. The map $\mc{F}$ relates this notion to its ``algebraic counterpart'', conjugacy classes of primitive hyperbolic transformations: a hyperbolic transformation $H \in \Gamma$ is called \textit{primitive} if it cannot be represented as $(H')^k$ for another hyperbolic transformation $H' \in \Gamma$ and $k \ge 2$ (since replacing $H$ by its conjugate in $\Gamma$ clearly preserves primitivity, we can also talk about \textit{primitive conjugacy classes}).
	Namely, we check that a closed geodesic $\gamma$ is primitive if and only if every conjugacy class of hyperbolic transformations in its preimage $\mc{F}^{-1}(\gamma)$ is primitive. 
	
	\medskip
	
	\begin{enumerate}
		\item[4.] If a hyperbolic transformation $H$ is not primitive, then the closed geodesic $\mc{F}([H])$ is not primitive;
		
		\medskip
		
		If $H = (H')^k$, then the axis of $H'$ is also the axis of $H$, and the translation length $\ell(H)$ is equal to $k \cdot \ell(H')$. Thus projection of the axis of $H$ to $\HH^n/\Gamma$ is periodic with period $\ell(H') = \frac{\ell(H)}{k} < \ell(H)$.
		
		\medskip
		
		\item[5.] If a closed geodesic $\gamma$ is not primitive, then at least one hyperbolic conjugacy class in the preimage $\mc{F}^{-1}(\gamma)$ is not primitive.
		
		\medskip
		
		If $\gamma$ has length $L$ and is not primitive, then it is also periodic with a smaller period $L' = \frac{L}{k} < L$. Let $\tilde \gamma$ be a lift of $\gamma$ to an infinite geodesic in $\HH^n$. As in the proof of property 1, we can construct a hyperbolic transformation $H' \in \Gamma$, which acts on the lift $\tilde \gamma$ as a translation by $L'$. Then $(H')^k$ acts on $\tilde \gamma$ as a translation by $L$. So, $\mc{F}([(H')^k]) = \gamma$, and it is clearly not primitive.		
	\end{enumerate}
	
	Finally, we use the properties of map $\mc{F}$ to deduce Theorem \ref{thm:multiplicities_geom} from Theorem \ref{thm:multiplicities_alg}. Recall that we denote by $G(L)$ the set of primitive closed geodesics in $\HH^n/\Gamma$ with lengths at most $L$, and we denote by $\Gamma_h(L)$ the set of conjugacy classes of primitive hyperbolic transformations in $\Gamma$ with translation lengths at most $L$.
	
	\begin{enumerate}
		\item[(1)] If $\Gamma$ is torsion-free, then properties 1, 2, 4, 5 together imply that
		$\# G(L) = \# \Gamma_h(L)$, which allows us to deduce Theorem \ref{thm:multiplicities_geom}(1) from Theorem \ref{thm:multiplicities_alg}.
		
		\item[(2)] Denote by $G'(L)$ the set of \textbf{not} primitive closed geodesics in $\HH^n/\Gamma$ with lengths at most $L$, and denote by $\Gamma'_h(L)$ the set of conjugacy classes of \textbf{not} primitive hyperbolic transformations in $\Gamma$ with translation lengths at most $L$. Then by properties 1 and 3
		\[
		\# G(L) + \# G'(L) \ge \frac{\#\Gamma_h(L) + \#\Gamma'_h(L)}{C(\Gamma)},
		\]
		and by property 5
		\[
		\# G'(L) \le \# \Gamma'_h(L).
		\]
		Results of Gangolli and Warner \cite[Proposition 5.4]{GW80} imply that most of hyperbolic conjugacy classes are primitive: 
		$\frac{\# \Gamma'_h(L)}{\# \Gamma_h(L)} \to 0$ when $L \to \infty$ and $\Gamma$ is fixed. Therefore, for sufficiently large $L \ge L_0(n, \Gamma)$
		\begin{equation}
			\label{eq:2.4-1}
			\# G(L) \ge \frac{\#\Gamma_h(L) + \#\Gamma'_h(L)}{C(\Gamma)} - \# \Gamma'_h(L) \ge \frac{\#\Gamma_h(L)}{2 \cdot C(\Gamma)}.
		\end{equation}
		Properties 1 and 4 give
		\[
		\{\ell(\gamma) : \gamma \in G(L)\} \subset \{\ell(H) : H \in \Gamma_h(L)\},
		\]
		and we combine it with the estimate \eqref{eq:2.4-1} to deduce Theorem \ref{thm:multiplicities_geom}(2) from Theorem \ref{thm:multiplicities_alg}.
		
	\end{enumerate}
		
	\section{Proof of Theorem \ref{thm:multiplicities_alg}}	\label{sec:proof_main}	
	
	
	First we present a short proof Proposition \ref{prop:salem_all}, which states that 
	\[
	\#\Sal_m(Q) = O(Q^m).
	\]
	\begin{proof}[\textbf{Proof of Proposition \ref{prop:salem_all}}]
		It suffices to estimate the number of monic palindromic polynomials $p \in \ZZ[x]$ of degree $2m$, such that all its roots except one have absolute value at most $1$, and the remaining one has absolute value at most $Q$. Suppose that
		$$
		p(x) = x^{2m} + p_{2m-1} x^{2m-1} + \ldots + p_1 x + 1, \; p_k = p_{2m-k}
		$$
		is such polynomial. We express it via its roots as
		$$
		p(x) = (x - \b_1)(x - \b_2) \ldots (x - \b_{2m}),  \;\;\; |\b_1| \le Q, \; |\b_i| \le 1 \text{ for } 2 \le i \le 2m,
		$$
		to conclude that
		$$
		|p_{2m-k}| \le \sum_{1 \le i_1 < \ldots < i_k \le 2m} |\b_{i_1} \ldots \b_{i_k}| \le {2m \choose k} \cdot Q = O(Q).
		$$
		So, we have $O(Q)$ options for each of the coefficients $p_1, \ldots, p_m$. Therefore, there are $O(Q^m)$ such polynomials.		
	\end{proof}
	
	
	Next we use a similar argument to prove Proposition \ref{prop:salem_square-rootable}, which states that	
	\[
	\#\Sal_m^{sq}(Q) = O(Q^{m/2} \cdot \eta(Q, m)), \text{ where }
	\]
	\[
	\eta(Q, m) = \begin{cases} Q^{1/2}, & \text{ if } m = 1, 2; \\ \log(Q), & \text{ if } m = 3, 4; \\
		1, & \text{ if } m \ge 5. \end{cases}
	\]	
	
	\begin{proof}[\textbf{Proof of Proposition \ref{prop:salem_square-rootable}}]
		If $\l \le Q$ is a square-rootable Salem number with the minimal polynomial $p$, then $\pm \l^{1/2}, \; \pm \l^{-1/2}$ are roots (with multiplicity one) of $p(x^2)$, and all other its roots have absolute value $1$. Let $q$ be the polynomial from the Definition \ref{def:square-rootable}. Then all roots of $q$ except one have absolute value at most $1$, and the remaining one has absolute value at most $R = Q^{1/2}$. Suppose that
		$$
		q(x) = x^{2m} + q_{2m-1} x^{2m-1} + \ldots + q_1 x + 1, \; q_k = q_{2m-k},
		$$
		then we can estimate its coefficients as
		\begin{equation}
			\label{eq:bounded_coefs}
			|q_{2m-k}| \le {2m \choose k} \cdot R \le cR. 
		\end{equation}
		On the other hand, all the coefficients of $q$ are algebraic integers because they can be polynomially expressed via roots of $p(x^2)$. 
		
		Multiplication of $\a$ by a square in $\QQ$ does not change the set $\sqrt{\a} \QQ$, thus we may assume $\a$ to be a square-free positive integer. So, even degree coefficients of $q$ lie in $\QQ \cap \AI = \ZZ$, and odd degree ones lie in
		$$
		\sqrt{\a} \QQ \cap \AI = \{ \sqrt{\a} \cdot r \mid r \in \QQ, \; \sqrt{\a} \cdot r \in \AI\} = \{ \sqrt{\a} \cdot r \mid r \in \QQ, \; \a r^2 \in \AI\} = \sqrt{\a} \ZZ.
		$$
		In particular, by inequality \eqref{eq:bounded_coefs} there is no reason to consider $\sqrt{\a} > c R$, since it would force all the odd degree coefficients to be zero.
		
		Hence there are at most $O(R)$ possible options for each of even degree coefficients of $q$ and, if we fix a square-free integer $\a \in [1, (cR)^2]$,  $O(R / \sqrt{\a})$ options for each of odd degree coefficients. Summing it up, the total number of possible polynomials $q(x)$ is at most
		\begin{equation}
			\label{eq:num_q(x)}
			C R^{[m/2]} \cdot \sum^{(cR)^2}_{\a = 1} \big(\frac{R}{\sqrt{\a}}\big)^{[(m+1)/2]} = C R^{[m/2] + [(m+1)/2]} \cdot \sum^{(cR)^2}_{\a = 1} \frac{1}{\sqrt{\a}^{[\frac{m+1}{2}]}} = C R^m \cdot \sum^{(cR)^2}_{\a = 1} \frac{1}{\sqrt{\a}^{[\frac{m+1}{2}]}}.
		\end{equation}
		The way we defined $\eta$ allows us to bound the sum $\sum\limits_{\a = 1}^{(cR)^2} \frac{1}{\sqrt{\a}^{[\frac{m+1}{2}]}}$ in the last expression as $O(\eta(R^2, m))$: 
		\begin{itemize}
			\item if $m = 1, 2$, then it equals $\sum\limits_{\a = 1}^{(cR)^2} \frac{1}{\a^{1/2}} \le 2\sqrt{(cR)^2} = O(R)$;
			\item if $m = 3, 4$, then it equals $\sum\limits_{\a = 1}^{(cR)^2} \frac{1}{\a} = O(\log R)$;
			\item if $m \ge 5$, then this sum converges, and is at most $\sum\limits_{\a = 1}^{(cR)^2} \frac{1}{\a^{3/2}} \le \zeta(\frac32) = O(1)$. 
		\end{itemize}
		So, we can continue equation \eqref{eq:num_q(x)} by concluding that there are $O(R^m \cdot \eta(R^2, m))$ possible polynomials $q(x)$. It remains to recall that $R = Q^{1/2}$ to finish the proof.
	\end{proof}
	
	Finally, we prove Theorem \ref{thm:multiplicities_alg}, which implies Theorem \ref{thm:multiplicities_geom} as explained in Section \ref{sec:bijection}. In order to do that, we combine Propositions \ref{prop:salem_all} and \ref{prop:salem_square-rootable} with Theorem \ref{thm:ERT} of Emery-Ratcliffe-Tschantz and with (a modification of) the Prime Geodesic Theorem of Margulis.
	
	\begin{proof}[\textbf{Proof of Theorem \ref{thm:multiplicities_alg}}]
		By the result of Gangolli and Warner \cite[Proposition 5.4]{GW80}, generalizing the classical theorem of Margulis \cite{Margulis69} to the setting of non-cocompact lattices, we can estimate the number of primitive hyperbolic conjugacy classes with translation lengths up to $L$ in a \textit{fixed} lattice $\Gamma$ as
		\[
		\# \Gamma_h(L) \sim \frac{e^{(n-1)L}}{(n-1)L}.
		\]
		Then for sufficiently large $L \ge L_0(n, \Gamma)$ 
		\begin{equation}	
			\label{eq:Margulis}
			\# \Gamma_h(L) \ge \frac{1}{2} \cdot \frac{e^{(n-1)L}}{(n-1)L}.
		\end{equation}
		
		\textit{Case of even $n$:} Theorem \ref{thm:ERT} asserts that the exponents $e^{\ell(H)}$ of translation lengths of hyperbolic transformations $H \in \Gamma$ are Salem numbers of degree at most $n+1$ (over $\QQ$, by Proposition \ref{prop:K=Q}). In fact, the degree is at most $n$ since degrees of Salem numbers are always even. By Proposition \ref{prop:salem_all} we have $\#\Sal_m(e^L) = O(e^{mL})$, and therefore,
		$$
		\#\{\ell(H) : H \in \Gamma_h(L)\} \le \sum^{n/2}_{m = 1}\#\Sal_m(e^L) = O(e^{nL/2}).
		$$
		Combining with formula \eqref{eq:Margulis} gives
		$$
		\frac{\# \Gamma_h(L)}{\# \{\ell(H) : H \in \Gamma_h(L)\}} \ge c(n) \frac{e^{(n/2 - 1)L}}{L}.
		$$
		
		\textit{Case of odd $n$:} Again, by Theorem \ref{thm:ERT} and Proposition \ref{prop:K=Q} the exponents $e^{2\ell(H)}$ are square-rootable over $\QQ$ Salem numbers of degree at most $n+1$. By Proposition \ref{prop:salem_square-rootable} we have 
		\[
		\#\Sal_m^{sq}(e^{2L}) = O(e^{mL} \cdot \eta(e^{2L}, m)),
		\]
		and therefore,
		\[
		\#\{\ell(H) : H \in \Gamma_h(L)\} \le \sum_{m = 1}^{(n+1)/2}\#\Sal^{sq}_m(e^{2L}) = O\Big(e^{(n+1)L / 2} \cdot \eta(e^{2L}, (n+1)/2)\Big).
		\]
		Since in this case $n$ is an odd integer at least $5$, the definition of $\eta$ yields
		$$
		\eta(e^{2L}, (n+1)/2) = \begin{cases} O(L), & \text{ if } n = 5, 7; \\ O(1), & \text{ otherwise,}\end{cases}
		$$
		which is exactly $O(L^{\delta_{5, 7}(n)})$.
		Combining it with formula \eqref{eq:Margulis} gives
		$$
		\frac{\# \Gamma_h(L)}{\# \{\ell(H) : H \in \Gamma_h(L)\}} \ge c(n) \frac{e^{((n-1)/2 - 1)L}}{L^{1 + \delta_{5, 7}(n)}}.
		$$
	\end{proof}	
	
	\begin{remark}
		The argument we used to prove Theorem \ref{thm:multiplicities_alg} is quite straightforward. First we use the Prime Geodesic Theorem to count the number of primitive hyperbolic conjugacy classes in $\Gamma$ up to given translation length. Then we recall that any translation length in this situation can be expressed as logarithm of a (square-rootable) Salem number. Finally, we estimate the number of (square-rootable) Salem numbers of given degree on the initial segment of the real line. 
		
		The last part here is not perfectly linked to the first two, what gives some room for improvement: though it is true that any (square-rootable) Salem number arises from a translation length of a hyperbolic element in \textit{some} arithmetic lattice $\Gamma$ of the simplest type (see \cite[Theorems 1.1 and 1.6]{ERT15}), it seems unlikely that all the (square-rootable) Salem numbers can be generated by a \textit{single} lattice. 
		
		Indeed, analogous bounds for mean multiplicities in the case  $n = 3$, obtained by Marklof in \cite{Marklof96} by more careful analysis, are stronger than what is possible to get using our approach. Unfortunately, his methods are very specific to $3$-dimensional case. 		
	\end{remark}
	
	\begin{remark}
		\label{remark:explicit_constant}
		The proof of Theorem \ref{thm:multiplicities_alg} we presented does not intend to optimize the value of the constant factor $c(n)$ arising from Proposition \ref{prop:salem_all} for even dimensions and Proposition \ref{prop:salem_square-rootable} for odd dimensions. However, it is possible (and straightforward) to write down an explicit expression for it if one uses result of G\"otze and Gusakova (Theorem \ref{GG_1.1}) for even dimensions and our Theorem \ref{thm:asymptotics} for odd dimensions instead. More precisely, for fixed $n \ge 4$, fixed non-cocompact arithmetic lattice $\Gamma \subset \Isom(\HH^n)$, and $L \to \infty$
		\[
		\frac{\# \Gamma_h(L)}{\# \{\ell(H) : H \in \Gamma_h(L)\}} \ge \big( c'(n) + o_{n, \Gamma}(1) \big) \frac{e^{([n/2] - 1)L}}{L^{1 + \delta_{5, 7}(n) }},
		\] 
		\[
		\text{ where } \;\; c'(n) = \frac{1}{(n-1) \cdot w_{[\frac{n+1}{2}] - 1}} \cdot \begin{cases}  1, & \text{ if $n$ is even;} \\ \frac{\pi^2}{6}, & \text{ if $n = 5, 7$;} \\ \frac{\zeta([\frac{n+3}{4}])}{\zeta(\frac{1}{2}[\frac{n+3}{4}])}, & \text{ if $n \ge 9$ is odd.}\end{cases},
		\]
		and $w_m$ are defined by formula \eqref{eq:w_m}.
		Moreover, one can show that
		\[
		w_m \le \frac{4^{m}}{\sqrt{(m+1)!}}, 
		\]
		and therefore $c'(n)$ tends to infinity superexponentially with respect to $n$.  
	\end{remark}
	
	\section{Growth rate of square-rootable Salem numbers}
	\label{sec:GG-modification}
	
	In this section we adapt the argument of G\"otze and Gusakova \cite{GG19} to prove Theorem \ref{thm:asymptotics} and conclude that the bound from Proposition \ref{prop:salem_square-rootable} is essentially optimal.
	
	Before heading to the proof, we outline the strategy used by G\"otze and Gusakova to obtain the asymptotic formula for the number of Salem numbers of degree $2m$ lying in the interval $(1, Q]$ (all of them, not only square-rootable ones).
	\begin{theorem}[{\cite[Theorem 1.1]{GG19}}]
		For fixed $m$ and $Q \to \infty$ 
		\label{GG_1.1}
		$$
		\#\Sal_m(Q) = w_{m-1} Q^m + O(Q^{m-1}), 
		$$
		where $w_m$ are defined by the same formula \eqref{eq:w_m}.
	\end{theorem}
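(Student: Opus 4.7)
The strategy I would follow is to set up a bijection between Salem numbers of degree $2m$ in $(1,Q]$ and a well-parametrized family of integer polynomials, then translate the count into a volume integral plus controllable errors. The first step is the classical substitution $y = x + x^{-1}$: since the minimal polynomial $p \in \ZZ[x]$ of a Salem number is monic palindromic of even degree $2m$, it can be uniquely written as $p(x) = x^m\, \widetilde p(x + x^{-1})$ for some monic $\widetilde p \in \ZZ[y]$ of degree $m$. The Salem root $\l$ together with $\l^{-1}$ corresponds to the single real root $\l + \l^{-1} \in (2,\, Q+Q^{-1}]$ of $\widetilde p$, while the $2m-2$ roots of $p$ on the unit circle correspond, in pairs $e^{\pm i\theta} \mapsto 2\cos\theta$, to the remaining $m-1$ roots of $\widetilde p$ in the interval $[-2,2]$. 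Conversely, any such $\widetilde p$ produces a candidate Salem polynomial, and the constraints of having one root strictly above $2$ and the others in $(-2,2)$ are open conditions that automatically rule out the degenerate $\pm 1$ lifts.

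Next I would count integer points in the set $\Omega_Q \subset \RR^m$ of coefficient vectors $(a_1,\ldots,a_m)$ of monic degree-$m$ polynomials whose roots $\mu_0 \in (2, Q+Q^{-1}]$ and $\mu_1,\ldots,\mu_{m-1} \in [-2,2]$. By standard lattice-point arguments, $\#(\Omega_Q \cap \ZZ^m) = \Vol(\Omega_Q) + O(Q^{m-1})$, the error coming from the $(m-1)$-dimensional boundary. To compute $\Vol(\Omega_Q)$, I would change variables from coefficients to roots, picking up the Vandermonde Jacobian
\[
\Big|\prod_{0 \le i < j \le m-1}(\mu_i - \mu_j)\Big|,
\]
and divide by $(m-1)!$ to account for the symmetry among the bounded roots. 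Factoring out the terms involving $\mu_0$ produces a product over $i$ of $(\mu_0 - \mu_i)$, which, after integrating out $\mu_0 \in (2,Q+Q^{-1}]$, contributes the leading $Q^m$ and reduces the remaining integral to a Selberg-type integral over $[-2,2]^{m-1}$ of $\prod_{i<j}|\mu_i - \mu_j|$. Evaluating this Selberg integral in closed form is where the constant $w_{m-1}$ appears, via the product formula in \eqref{eq:w_m}; this calculation is the technical heart of the argument.

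It then remains to pass from ``polynomials with the correct root location'' to genuine Salem numbers of degree exactly $2m$. I would show that the number of reducible $\widetilde p$, or those whose lift $p$ fails to be irreducible, is $O(Q^{m-1})$: a proper factorization forces one factor to carry the large root and the other factors to have coefficients bounded independently of $Q$, giving a codimension-one constraint on $(a_1,\ldots,a_m)$. The case of a unit-circle root of multiplicity $> 1$, or of $\widetilde p$ having $\pm 2$ as a root, is similarly restricted to a lower-dimensional subset and contributes $O(Q^{m-1})$. Finally one discards primitive polynomials whose corresponding $\l$ lies outside $(1,Q]$ (boundary effect $O(Q^{m-1})$).

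The main obstacle I expect is the exact evaluation of the Selberg integral $\int_{[-2,2]^{m-1}} \prod_{i<j}|\mu_i - \mu_j|\, d\mu_1\cdots d\mu_{m-1}$ leading to the precise constant $w_{m-1}$; bookkeeping the factors of $2$, the symmetry $(m-1)!$, and the $\mu_0$-integration contribution so that they combine cleanly into \eqref{eq:w_m} is delicate but follows standard random-matrix / Selberg techniques. A secondary difficulty is getting a genuinely sharp $O(Q^{m-1})$ error (rather than a weaker bound) from the boundary/irreducibility analysis; this requires treating each excluded configuration by an explicit geometric argument in the coefficient polytope rather than a soft compactness argument.
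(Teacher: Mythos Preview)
Your proposal is correct and matches the strategy the paper outlines for the G\"otze--Gusakova argument: parametrize candidate polynomials by their roots, count lattice points as volume plus an $O(Q^{m-1})$ Lipschitz-boundary error (the paper invokes Widmer's theorem here), discard the $O(Q^{m-1})$ reducible polynomials, and evaluate the volume via the Jacobian to extract the constant $w_{m-1}$. The only difference is cosmetic: you first apply the substitution $y = x + x^{-1}$ to pass from palindromic degree-$2m$ polynomials to ordinary monic degree-$m$ polynomials with roots $\mu_0 = \lambda + \lambda^{-1}$ and $\mu_i = 2\cos\theta_i$, whereas the paper's sketch stays in the palindromic-coefficient space parametrized directly by $(\lambda, \theta_1, \ldots, \theta_{m-1})$; since this substitution is an integer unimodular change of basis on coefficients, lattice counts and volumes agree, and your Selberg integral is precisely the Jacobian computation cited as \cite[Lemma~2.4]{GG19}.
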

	Let $V^{all}_m$ be the set of monic palindromic polynomials in $\RR[x]$ of degree $2m$, identified with a subset of $\RR^m$ via considering their coefficients at $x, x^2, \ldots, x^m$. Define an injective map 
	\[
	f_Q: \{(\l, \theta_1, \ldots, \theta_{m-1}) \in \RR^m \mid 1 < \l \le Q, \; 0 \le \theta_1 \le \ldots, \le \theta_{m-1} \le \pi\} \to V^{all}_m \; \text{ as }
	\]	
	\begin{equation}
		\label{eq:f}
		(\l, \theta_1, \ldots, \theta_{m-1}) \mapsto (x-\l)(x-\l^{-1})\prod_{j = 1}^{m-1}(x-e^{i\theta_j})(x-e^{-i\theta_j}).
	\end{equation}
	Then each integer point inside $V_m(Q) = \im f_Q \subset \RR^m$ corresponds to some integer polynomial, with one of its roots being a Salem number. Since the number of reducible integer polynomials inside $V_m(Q)$ is $O(Q^{m-1})$ (see \cite[Lemma 2.2]{GG19}), most of integer points are exactly minimal polynomials of Salem numbers.
	
	To have a chance to count the number of integer points inside, one needs to check that the boundary of $V_m(Q)$ is ``nice enough''.
	
	\begin{definition}
		We say that the boundary $\partial B$ of a set $B \subset \RR^{m}$ is of Lipschitz class $(M, L)$ if there exist $M$ maps
		\[
		\varphi_1, \ldots, \varphi_M : [0, 1]^{m-1} \to \RR^{m}, 
		\]
		such that each of them is $L$-Lipschitz
		\[
		\lVert\varphi_i(x) - \varphi_i(y)\rVert \le L \lVert x-y \rVert \text{ for any } x, y \in [0, 1]^{m-1}, \; 1 \le i \le M
		\]
		and $\partial B$ is covered by the union of their images.
	\end{definition}
	
	\begin{lemma}[{\cite[Lemma 2.1]{GG19}}]
		\label{lemma:M_cQ}
		Boundary of $V_m(Q)$ is of Lipschitz class $(M, cQ)$ for some constants $M, c > 0$.
	\end{lemma}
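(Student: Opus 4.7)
The plan is to show that $\partial V_m(Q)$ lies in the image under $f_Q$ of the boundary of the parameter domain $D_Q = \{(\lambda,\theta_1,\ldots,\theta_{m-1}) : 1 \le \lambda \le Q,\; 0 \le \theta_1 \le \ldots \le \theta_{m-1} \le \pi\}$, and then to parameterize $\partial D_Q$ face by face by a bounded number of $O(Q)$-Lipschitz maps from $[0,1]^{m-1}$.

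\textbf{Step 1 (boundary inclusion).} The map $f_Q$ extends continuously to $D_Q$, and on $\mathrm{int}(D_Q)$ all $2m$ roots appearing in the definition of $f_Q$ are pairwise distinct, so $f_Q|_{\mathrm{int}(D_Q)}$ is injective. Invariance of domain implies that $f_Q(\mathrm{int}(D_Q))$ is open in $\RR^m$, hence contained in $\mathrm{int}(V_m(Q))$. Since $D_Q$ is the closure of its interior, continuity gives $V_m(Q) = \overline{f_Q(\mathrm{int}(D_Q))}$, and therefore $\partial V_m(Q) \subseteq f_Q(\partial D_Q)$.

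\textbf{Step 2 (covering $\partial D_Q$).} The boundary $\partial D_Q$ decomposes into $m+2$ faces, corresponding to replacing one of the inequalities $\lambda \ge 1$, $\lambda \le Q$, $\theta_1 \ge 0$, $\theta_{m-1} \le \pi$, or $\theta_i \le \theta_{i+1}$ by equality. Each face is $(m-1)$-dimensional and can be parameterized by a single map $\varphi:[0,1]^{m-1}\to\partial D_Q$ whose $\lambda$-component is either constant or the affine rescaling $1+(Q-1)t_0$, and whose $\theta$-components come from the rescaled $1$-Lipschitz sort map onto the appropriate simplex $\{0\le s_1\le\ldots\le s_k\le \pi\}$. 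Each such $\varphi$ is $O(Q)$-Lipschitz.

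\textbf{Step 3 (Lipschitz estimate for $f_Q\circ\varphi$).} Write $P(x) = (x^2-(\lambda+\lambda^{-1})x+1)\prod_j(x^2-2\cos\theta_j\cdot x+1)$. Since the second factor has all roots on the unit circle and bounded degree, its coefficients are $O(1)$; hence the coefficients $p_j$ of $P$ satisfy $\partial p_j/\partial\lambda = O(1)$ and $\partial p_j/\partial\theta_i = O(Q)$. By the chain rule, the $t_0$-column of $D(f_Q\circ\varphi)$ equals $(Q-1)\cdot\partial f_Q/\partial\lambda = O(Q)$, while the $t_j$-columns for $j\ge 1$ are bounded linear combinations of the $\partial f_Q/\partial\theta_k$, hence also $O(Q)$. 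Every entry is $O(Q)$, so $f_Q\circ\varphi$ is $cQ$-Lipschitz and the lemma follows with $M = O_m(1)$.

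\textbf{Main obstacle.} The only delicate point is the cancellation in Step 3: a naive application of $L(f_Q\circ\varphi)\le L(f_Q)\cdot L(\varphi) = O(Q)\cdot O(Q) = O(Q^2)$ is too weak. What rescues the bound is that $Df_Q$ and $D\varphi$ are \emph{dually scaled} in the $\lambda$-direction, the large factor $Q-1$ coming from $\varphi$ multiplies the small column $\partial f_Q/\partial\lambda = O(1)$, keeping the product linear in $Q$. Identifying this balance, and verifying it by direct estimation of the coefficients of $P$, is the only non-routine ingredient; everything else reduces to polytope combinatorics and standard invariance-of-domain arguments.
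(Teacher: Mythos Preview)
The paper does not give its own proof of this lemma; it is quoted as \cite[Lemma~2.1]{GG19}. Your argument is correct and follows the natural route: cover $\partial V_m(Q)$ by the $f_Q$-images of the finitely many faces of the parameter polytope $D_Q$, and bound the Lipschitz constant of each composition by a direct Jacobian estimate. The point you flag as the main obstacle --- that $\partial f_Q/\partial\lambda = O(1)$ while $\partial f_Q/\partial\theta_i = O(Q)$, so the $O(Q)$ stretch from the rescaling $[0,1]\to[1,Q]$ in the $\lambda$-coordinate multiplies the \emph{small} column --- is exactly the right observation.

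One cosmetic remark on Step~1: the equality $V_m(Q) = \overline{f_Q(\mathrm{int}(D_Q))}$ is not quite literal, since the paper defines $V_m(Q)$ with the half-open constraint $1<\lambda\le Q$ and this image need not be closed. What your argument actually yields, and all that is needed, is the pair of inclusions $\overline{V_m(Q)}\subseteq f_Q(D_Q)$ (compactness of $f_Q(D_Q)$) and $f_Q(\mathrm{int}(D_Q))\subseteq\mathrm{int}(V_m(Q))$ (invariance of domain), from which $\partial V_m(Q)\subseteq f_Q(\partial D_Q)$ follows immediately.
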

	Then lattice point counting result of Widmer allows to estimate the number of lattice points inside $V_m(Q)$ by its volume.	
	\begin{theorem}[{\cite[Theorem 5.4]{Widmer10}}]
		\label{thm:lattice_points}
		Let $\Lambda$ be a lattice in $\RR^d$ with successive minima $\l_1, \ldots, \l_d$, and let $B \subset \RR^d$ be a bounded set with boundary of Lipschitz class $(M, L)$. Then $B$ is measurable, and the number $\#(\Lambda \cap B)$ of lattice points inside $B$ satisfies
		$$
		\Bigl| \#(\Lambda \cap B) - \frac{\Vol(B)}{\det \Lambda} \Bigr| \le c(d) M \max_{0 \le i < d} \frac{L^i}{\l_1 \ldots \l_i}.
		$$
	\end{theorem}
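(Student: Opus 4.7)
The plan is to pass from lattice point counting to counting fundamental cells that meet the boundary $\partial B$, and then estimate that count using both the Lipschitz covering of $\partial B$ and a basis of $\Lambda$ adapted to its successive minima.

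Fix a fundamental parallelepiped $P$ of $\Lambda$. For each $x \in \Lambda$, the translate $x + P$ is contained in $B$, disjoint from $B$, or meets $\partial B$; let $N_{\mathrm{int}}$ and $N_{\mathrm{bnd}}$ count lattice points of the first and third type. Since $\{x + P\}_{x \in \Lambda}$ tiles $\RR^d$ we get $N_{\mathrm{int}} \le \#(\Lambda \cap B) \le N_{\mathrm{int}} + N_{\mathrm{bnd}}$ and $N_{\mathrm{int}} \det \Lambda \le \Vol(B) \le (N_{\mathrm{int}} + N_{\mathrm{bnd}}) \det \Lambda$, and subtracting the sandwiched quantities gives
\[
\Bigl| \#(\Lambda \cap B) - \frac{\Vol(B)}{\det \Lambda} \Bigr| \le N_{\mathrm{bnd}}.
\]
The measurability of $B$ drops out of the same tiling argument. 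So everything reduces to bounding $N_{\mathrm{bnd}}$, the number of lattice translates of $P$ meeting the boundary.

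To estimate $N_{\mathrm{bnd}}$, I would first apply Minkowski's second theorem to pick a basis $b_1, \ldots, b_d$ of $\Lambda$ with $\|b_i\| \le c(d) \l_i$, and take $P$ to be the parallelepiped they span. Working in the coordinate system $(b_1, \ldots, b_d)$ and rescaling direction $i$ by $1/\l_i$, the lattice becomes (a scaling of) $\ZZ^d$ and the question turns into counting unit cells meeting the image of each of the $M$ Lipschitz maps $\varphi_j : [0,1]^{d-1} \to \RR^d$ covering $\partial B$. In the rescaled coordinates the image of $\varphi_j$ has `spread' at most $O(L/\l_i)$ in direction $i$; combined with the fact that $\varphi_j$ parametrises a $(d-1)$-dimensional piece, a direct covering of $[0, 1]^{d-1}$ by sub-cubes shows that at most $c(d) \max_{0 \le i < d} L^i / (\l_1 \cdots \l_i)$ unit cells are touched per map $\varphi_j$, and summing over $j$ gives the $M$ prefactor.

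The main technical obstacle is obtaining precisely the stated maximum rather than a coarser product like $\prod_i (L/\l_i + 1)$. The cleanest way I know is induction on the dimension $d$: slice $B$ by hyperplanes parallel to $\mathrm{span}(b_1, \ldots, b_{d-1})$ with spacing $b_d$, note that there are at most $O(L/\l_d + 1)$ slices where $\partial B$ appears, and that the corresponding slices of $\partial B$ inherit a Lipschitz-$(M, L)$ structure in dimension $d - 1$ so the inductive bound applies. Iterating the recursion and observing that at each step the new contribution either joins the numerator or stays as a constant (depending on whether $L \gg \l_i$ or not) produces exactly the dimension-indexed maximum in the final error, with $c(d)$ absorbing the combinatorial constants from both Minkowski's theorem and the recursion. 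The base case $d = 1$ is the trivial observation that an interval of length $L$ meets at most $L/\l_1 + 1$ translates of a rank-one lattice.
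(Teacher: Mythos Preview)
The paper does not prove this statement at all: it is quoted verbatim as \cite[Theorem~5.4]{Widmer10} and used as a black box in the argument, so there is no ``paper's own proof'' to compare your attempt against.

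As for your sketch itself, the overall architecture (reduce to counting fundamental cells meeting $\partial B$, choose a basis adapted to the successive minima, then exploit the Lipschitz parametrisation) is the standard one and is essentially what Widmer does. Two points deserve care. First, Minkowski's second theorem gives linearly independent vectors achieving the $\l_i$, not a basis; you need the additional (classical) fact that a basis $b_1,\ldots,b_d$ with $\|b_i\|\le c(d)\l_i$ exists. Second, your inductive step asserts that hyperplane slices of $\partial B$ ``inherit a Lipschitz-$(M,L)$ structure in dimension $d-1$''. This is not automatic: intersecting the image of a Lipschitz map $\varphi_j:[0,1]^{d-1}\to\RR^d$ with a hyperplane does not in general produce something covered by Lipschitz maps from $[0,1]^{d-2}$ with controlled constants. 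The robust way to get the $\max_{0\le i<d} L^i/(\l_1\cdots\l_i)$ bound is the one you mention earlier and then abandon: subdivide the parameter cube $[0,1]^{d-1}$ into $N^{d-1}$ sub-cubes, observe that each image has diameter $O(L/N)$ and hence meets a bounded number of lattice cells in the rescaled coordinates, and optimise over $N$. That argument goes through cleanly; the slicing induction as you state it does not.
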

	So, the problem is reduced to computation of the volume $v_m(Q) = \Vol(V_m(Q))$. Careful calculation of Jacobian of the map $f_Q$ (see \cite[Lemma 2.4]{GG19}) and its integration over the domain provide the estimate
	\begin{equation}
		\label{eq:v_m_w_m}
		v_m(Q) = w_{m-1} Q^m + O(Q^{m-1}), 
	\end{equation}
	thus finishing the proof of Theorem \ref{GG_1.1}.
	
	\bigskip
	
	Now return to our Theorem \ref{thm:asymptotics}. We want to count the number of square-rootable (over $\QQ$) Salem numbers of degree $2m$ up to $Q$, each of them has an associated polynomial $q(x)$ from Definition \ref{def:square-rootable}. The following Definition \ref{def:P_m} specifies which exactly polynomials we are interested in.
	
	\begin{definition}
		\label{def:P_m}
		Denote by $P_m$ the set of pairs $(\a, q)$, where $\a$ is a positive square-free integer and $q$ is a polynomial in $\RR[x]$ such that
		\begin{enumerate}[label=(\alph*)]
			\item $q$ is monic and palindromic;
			\item $\deg q = 2m$;
			\item even degree coefficients of $q$ lie in $\ZZ$, odd degree coefficients of $q$ lie in $\sqrt{\a} \ZZ$;
			\item all the roots of $q$ except two have absolute value exactly $1$, and the remaining two are positive reals $\l > 1$ and $\l^{-1} < 1$ (both with multiplicity one);
			\item $q(\xi) \neq 0$ for any root of unity $\xi$ of degree at most $4m$.
		\end{enumerate}
		Also define a map $\Phi: P_m \to \RR$ by
		\[
		\Phi: (\a, q) \mapsto \l^2, \text{ where } \l \text{ is the unique root of $q$ with absolute value greater than $1$}.
		\]
	\end{definition}
	
	\medskip
	
	The following lemma resembles Definition \ref{def:square-rootable}, and will be used to relate the set $P_m$ to square-rootable Salem numbers.
	
	\begin{lemma}
		\label{lemma:q(x)q(-x)}
		For any $(\a, q) \in P_m$ we have 
		\[
		q(x) q(-x) = p(x^2),
		\]
		where $p \in \ZZ[x]$ is the minimal polynomial of $\l^2 = \Phi((\a, q))$ over $\QQ$.
	\end{lemma}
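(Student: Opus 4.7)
The plan is to begin with a direct computation. Decompose $q = q_{\mathrm{e}} + q_{\mathrm{o}}$, where $q_{\mathrm{e}} \in \ZZ[x]$ collects the even-degree terms of $q$ and $q_{\mathrm{o}} \in \sqrt{\a}\,\ZZ[x]$ collects the odd-degree ones. Since $q_{\mathrm{e}}(-x) = q_{\mathrm{e}}(x)$ and $q_{\mathrm{o}}(-x) = -q_{\mathrm{o}}(x)$, we get $q(x) q(-x) = q_{\mathrm{e}}(x)^2 - q_{\mathrm{o}}(x)^2$, and both summands lie in $\ZZ[x]$ and involve only even powers of $x$. Hence there is a unique monic $P \in \ZZ[x]$ of degree $2m$ with $q(x) q(-x) = P(x^2)$. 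Palindromicity of $q$ transfers to $P$, giving in particular $P(0) = 1$. Since $P(\l^2) = q(\l) q(-\l) = 0$, the minimal polynomial $p$ of $\l^2$ over $\QQ$ divides $P$ in $\ZZ[x]$ by Gauss's lemma, so one can write $P = p \cdot h$ with $h \in \ZZ[x]$ monic. The lemma then reduces to showing $h = 1$.

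From $P(x^2) = q(x) q(-x)$ the multiset of roots of $P$ is $\{\l^2, \l^{-2}\} \cup \{e^{\pm 2i\theta_j} : j = 1, \ldots, m-1\}$, so the roots of $h$ form a submultiset of $\{\l^{-2}\} \cup \{e^{\pm 2i\theta_j}\}$. I would first rule out $\l^{-2}$ being a root of $h$: the absolute value of its constant term is $|h(0)| = \prod_r |r|$ (the product running over roots of $h$), which would then be bounded by $\l^{-2} < 1$. Combined with $h(0) \in \ZZ$ this forces $h(0) = 0$, contradicting $h(0) \cdot p(0) = P(0) = 1$.

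Consequently all roots of $h$ lie on the unit circle and are algebraic integers; moreover the entire Galois orbit of each such root consists of roots of $h$ (its minimal polynomial divides $h$ in $\ZZ[x]$) and hence also sits on the unit circle. Kronecker's theorem then forces every root of $h$ to be a root of unity, so $h$ factors in $\ZZ[x]$ as a product of cyclotomic polynomials. If $h$ were nonconstant, some $e^{2i\theta_j}$ would be a primitive $n$-th root of unity with $\varphi(n) \le \deg h \le 2m$; but then $e^{i\theta_j}$ would itself be a root of unity whose order divides $2n$, and its degree over $\QQ$ would be at most $\varphi(2n) \le 2\varphi(n) \le 4m$. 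This contradicts clause (e) of Definition \ref{def:P_m}, since $q(e^{i\theta_j}) = 0$. Hence $h = 1$ and $P = p$, which is the claim.

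The step I expect to be the most delicate is this final use of clause (e). The degree bound on $e^{2i\theta_j}$ transfers to $e^{i\theta_j}$ only via the elementary estimate $\varphi(2n) \le 2\varphi(n)$, and this factor of two is precisely why the threshold in Definition \ref{def:P_m}(e) must be $4m$ rather than $2m$; any weaker hypothesis on $q$ would leave room for $h$ to contain a genuine cyclotomic factor and the argument would break.
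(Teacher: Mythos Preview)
Your proof is correct and follows essentially the same route as the paper's: both compute $q(x)q(-x)$ to obtain an integer polynomial $P(x^2)$, then use the constant-term (product-of-roots) argument to force $\lambda^2$ and $\lambda^{-2}$ into the same irreducible factor, and finish with Kronecker's theorem plus condition~(e). The only differences are cosmetic---you phrase irreducibility as ``$h=1$'' and pass through $\varphi(2n)\le 2\varphi(n)$ explicitly, whereas the paper argues directly that any nontrivial cofactor of $P$ would produce a root of unity of degree at most $4m$ among the roots of $q$.
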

	\begin{proof}		
		Consider the polynomial
		\[
		r(x) = q(x) q(-x).
		\]		
		Notice that $r(x) = r(-x)$, therefore all the odd degree coefficients of $r$ are zero. Moreover, conditions (b) and (c) from Definition \ref{def:P_m} imply that even degree coefficients of $r$ are integers. Indeed, one can write		
		\[
		q(x) = a_{2m} x^{2m} + \sqrt{\a} a_{2m-1} x^{2m-1} + a_{2m-2} x^{2m-2} + \ldots \; \text{ with } a_i \in \ZZ.
		\]
		Then the coefficient of $r(x)$ at $x^{2k}$ equals 
		\[
		\sum_{l = 0}^{k} a_{2l} a_{2(k - l)} - \sum_{l = 0}^{k-1} \sqrt{\a}a_{2l+1} \cdot \sqrt{\a} a_{2(k-l)-1} = \sum_{l = 0}^{k} a_{2l} a_{2(k - l)} - \sum_{l = 0}^{k-1} \a a_{2l+1} a_{2(k-l)-1}, 
		\]
		which is clearly an integer. So, $r(x) = p(x^2)$ for some monic polynomial $p \in \ZZ[x]$.
		
		\medskip
		
		If $\l$ is the root of $q$ from condition (d), then $\l^2 > 1,\; \l^{-2} < 1$ are roots (with multiplicity one) of $p$, and all other its roots have absolute value $1$. To check that $p$ is the minimal polynomial of $\l^2$, it remains to verify its irreducibility. 
		
		Suppose that it is reducible: $p(x) = p_1(x) p_2(x)$ for some monic polynomials $p_1, p_2 \in \ZZ[x]$, and $\l^{-2}$ is a root of $p_1$. Observe that if $\l^2$ is not a root of $p_1$, then the product of its roots has absolute value $\l^{-2} \in (0, 1)$, but its constant term is an integer. 
		So, $\l^2$ is a root of $p_1$ as well. Thus $p_2$ has only roots of absolute value $1$, and the classical result of Kronecker \cite{Kronecker1857} forces them to be roots of unity. Therefore, $p(\xi_1) = 0$ for some root of unity $\xi_1$ of degree at most $2m$, and then $q(\xi_2) = 0$ for some root of unity $\xi_2$ of degree at most $4m$, what contradicts condition (e).
	\end{proof}
	
	Next we observe that inside the set $P_m$ polynomial $q$ determines the value of $\a$ uniquely.
	
	\begin{lemma}
		\label{lemma:q_determines_alpha}
		If $(\a, q) \in P_m$ then at least one odd degree coefficient of $q$ is non-zero. As a consequence, if $(\a_1, q), \; (\a_2, q) \in P_m$, then $\a_1 = \a_2$.
	\end{lemma}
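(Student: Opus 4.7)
The plan is to prove the first claim by contradiction and then derive the second as a quick consequence. Suppose, towards a contradiction, that all odd-degree coefficients of $q$ vanish. Then $q(x) = q(-x)$, so $q$ is actually a polynomial in $x^2$: write $q(x) = s(x^2)$ for some monic $s \in \RR[x]$ of degree $m$. Invoking Lemma \ref{lemma:q(x)q(-x)} we obtain
\[
p(x^2) \;=\; q(x)\,q(-x) \;=\; s(x^2)^2,
\]
hence $p(y) = s(y)^2$ as polynomials in $y$. But $p$ is the minimal polynomial of $\l^2$ over $\QQ$ and is therefore irreducible, while $s$ has positive degree $m \ge 1$; this contradiction establishes the first assertion.

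For the second assertion, I would pick any odd-degree coefficient $c$ of $q$, which by the first part can be chosen nonzero. The assumptions $(\a_1,q),\,(\a_2,q) \in P_m$ give $c = \sqrt{\a_1}\,k_1 = \sqrt{\a_2}\,k_2$ for some nonzero integers $k_1, k_2$. Squaring yields $\a_1 k_1^2 = \a_2 k_2^2$, so $\a_1/\a_2$ is a rational square; a prime-by-prime valuation comparison, using that both $\a_1$ and $\a_2$ are positive square-free integers, forces $\a_1 = \a_2$.

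I do not anticipate any serious obstacle: the first part rests entirely on the irreducibility of $p$ supplied by Lemma \ref{lemma:q(x)q(-x)} (which in turn relied on condition (e) of Definition \ref{def:P_m} ruling out cyclotomic factors), and the second part is an elementary consequence of square-freeness. The only minor bookkeeping point is that $\l > 1$ forces $\deg p \ge 2$, so $s$ really is non-constant and $s^2$ really is reducible.
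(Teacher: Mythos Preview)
Your proof is correct, but the first part takes a different route from the paper. The paper's argument is more direct: from $q(x)=q(-x)$ it simply observes that $-\lambda$ is then also a root of $q$, and since $|-\lambda|=\lambda>1$ while $-\lambda$ is not a positive real, this immediately contradicts condition~(d) of Definition~\ref{def:P_m}. Your argument instead passes through Lemma~\ref{lemma:q(x)q(-x)} to obtain $p=s^2$ and then invokes irreducibility of~$p$. This is valid (note that the even-degree coefficients of $q$ lie in $\ZZ$ by condition~(c), so in fact $s\in\ZZ[x]$ and the factorization is over~$\QQ$; alternatively, $p$ is separable and cannot be a square), but it relies on the heavier Lemma~\ref{lemma:q(x)q(-x)}, whose proof in turn uses Kronecker's theorem and condition~(e). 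The paper's one-line contradiction via condition~(d) avoids all of that. For the second assertion, your argument and the paper's (which phrases it as $\sqrt{\alpha_1}\,\ZZ\cap\sqrt{\alpha_2}\,\ZZ=\{0\}$ for distinct square-free $\alpha_1,\alpha_2$) are essentially the same.
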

	\begin{proof}
		Suppose that all the odd degree coefficients of $q$ are zero, then $q(x) = q(-x)$. In particular, if $\l$ is a root of $q$, then so is $-\l$, contradiction with condition (d). 
		
		For the second part, if $\a_1, \a_2$ are distinct positive square-free integers, then all the odd degree coefficients of $q$ have to lie in $\sqrt{\a_1} \ZZ \cap \sqrt{\a_2} \ZZ = \{0\}$. Contradiction with the first part of this lemma.  
	\end{proof}
	
	The next two lemmas assert that the map $\Phi$ from Definition \ref{def:P_m} is almost a bijection onto the set of square-rootable (over $\QQ$) Salem numbers of degree $2m$.
	
	\begin{lemma}
		\label{lemma:im=sal}
		The image of $\Phi$ coincides with $\Sal_m^{sq}$.
	\end{lemma}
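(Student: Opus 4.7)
The plan is to establish $\im \Phi = \Sal_m^{sq}$ by proving the two inclusions separately. For the forward inclusion, I would start with a pair $(\a, q) \in P_m$ and set $\l^2 = \Phi((\a, q))$; invoking Lemma~\ref{lemma:q(x)q(-x)} immediately gives the factorization $q(x) q(-x) = p(x^2)$ with $p$ the minimal polynomial of $\l^2$ over $\QQ$. Comparing degrees shows $\deg p = 2m$, and reading off the roots of $p$ from those of $q$ yields exactly one real root $\l^2 > 1$, its reciprocal $\l^{-2}$, and $2m - 2$ further roots on the unit circle (squares of the unimodular roots of $q$ from condition (d)). Hence $\l^2$ is a Salem number of degree $2m$, and the same pair $(\a, q)$ witnesses its square-rootability over $\QQ$: $\a$ is a positive rational, hence totally positive since $\QQ$ has only one embedding, and the coefficient conditions (c) of Definition~\ref{def:P_m} are a strengthening of those in Definition~\ref{def:square-rootable}.

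For the reverse inclusion, I would start with $\mu \in \Sal_m^{sq}$ and unpack Definition~\ref{def:square-rootable} to extract a totally positive $\a \in \QQ$ and a monic palindromic $q \in \RR[x]$ with even-degree coefficients in $\QQ$, odd-degree coefficients in $\sqrt{\a}\QQ$, and $q(x)q(-x) = p(x^2)$, where $p$ is the minimal polynomial of $\mu$. The aim is to normalize $(\a, q)$ into an element of $P_m$ mapping to $\mu$ under $\Phi$. First I would replace $\a$ by its square-free part, which leaves $\sqrt{\a}\QQ$ invariant. Next, since every root of $q$ is also a root of the monic integer polynomial $p(x^2)$, it is an algebraic integer, and therefore every coefficient of $q$ is an algebraic integer; combined with the field memberships this upgrades the even-degree coefficients to $\ZZ$ and the odd-degree ones to $\sqrt{\a}\ZZ$, matching condition (c).

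Next I would handle the root configuration required by conditions (d) and (e). The roots of $p(x^2)$ come in $\pm$-pairs $\{\pm \sqrt{\nu}\}$ as $\nu$ ranges over the roots of $p$, and from $q(x) q(-x) = p(x^2)$ one sees that $q$ picks out exactly one member of each pair. Since $q$ is palindromic, its roots come in reciprocal pairs, which forces $q$ to contain either both of $\sqrt{\mu}, 1/\sqrt{\mu}$ or both of $-\sqrt{\mu}, -1/\sqrt{\mu}$. In the latter case I would replace $q$ by $q(-x)$; because $\deg q = 2m$ is even, this preserves monicness and palindromicity, merely flips the signs of the odd-degree coefficients (keeping them in $\sqrt{\a}\ZZ$), and leaves $q(x) q(-x)$ unchanged. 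The resulting polynomial has $\sqrt{\mu}$ as its unique root of modulus greater than one, giving condition (d) with $\l = \sqrt{\mu}$. Condition (e) is then automatic: a root of unity root of $q$ would square to a root of unity root of $p$, but $p$ is irreducible with the Salem root $\mu$, so no root of $p$ can itself be a root of unity.

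The main obstacle I anticipate is the sign bookkeeping in the reverse direction: Definition~\ref{def:square-rootable} allows a genuine ambiguity in how the two square roots of each root of $p$ are distributed between $q(x)$ and $q(-x)$, and it must be checked that this ambiguity can always be resolved in favor of the positive square root of $\mu$ without spoiling any of the other structural conditions of Definition~\ref{def:P_m}. Everything else reduces to direct unpacking of definitions together with the Kronecker-style irreducibility argument already used inside the proof of Lemma~\ref{lemma:q(x)q(-x)}.
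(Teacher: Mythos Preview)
Your proposal is correct and follows essentially the same route as the paper: both directions proceed by the same normalizations (square-free $\a$, algebraic-integer coefficients via the roots of $p(x^2)$, and the $q(x)\leftrightarrow q(-x)$ swap to place $\sqrt{\mu}$ among the roots), and your justification of condition~(e) via irreducibility of $p$ is exactly what the paper means by ``Condition~(e) follows from irreducibility of $p$.'' The only difference is that you spell out a few steps (even degree preserving monicness under $x\mapsto -x$, why an irreducible $p$ cannot have a root of unity as a root) that the paper leaves implicit.
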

	
	\begin{lemma}
		\label{lemma:size_of_preimage}
		For any $x \in \RR$
		\[
		\#(\Phi^{-1}(x)) \le C(m) = 2^{2m}. 
		\]
		Moreover, if $m$ is odd then for any $x \in \RR$ 
		\[
		\#(\Phi^{-1}(x)) \le 1. 		
		\]
		In other words, if $m$ is odd then $\Phi$ is injective.
	\end{lemma}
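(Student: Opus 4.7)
The plan is to exploit Lemma \ref{lemma:q(x)q(-x)}, which says every $(\a, q) \in \Phi^{-1}(x)$ satisfies $q(t) q(-t) = p(t^2)$ with $p$ the minimal polynomial of $x$ over $\QQ$; combined with Lemma \ref{lemma:q_determines_alpha}, this reduces the fiber count to a count of admissible $q$'s, since $p$ is determined by $x$ and $\a$ is determined by $q$.

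For the crude bound $\#\Phi^{-1}(x) \le 2^{2m}$, the polynomial $p$ is irreducible of degree $2m$ (hence separable) and $p(0) \ne 0$ (as $x > 1$), so $p(t^2)$ has $4m$ distinct complex roots which partition into $2m$ pairs of the form $\{r, -r\}$ because $p(t^2)$ is even. The identity $q(t) q(-t) = p(t^2)$ forces each such pair to split: one root contributes to $q$, the other to $q(-\cdot)$. This gives at most $2^{2m}$ possibilities for $q$.

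For the injectivity of $\Phi$ when $m$ is odd, I write $q(t) = \prod_{j=0}^{m-1}(t^2 - b_j t + 1)$ with $b_0 = \sqrt{x} + 1/\sqrt{x} > 0$ fixed by condition (d) of Definition \ref{def:P_m}, and $b_j = \epsilon_j \sqrt{\beta_j + 2}$ for $j \ge 1$, where $\epsilon_j \in \{\pm 1\}$ and $\beta_0, \ldots, \beta_{m-1}$ are the roots of the minimal polynomial $P$ of $\beta_0 = x + x^{-1}$ over $\QQ$. After the substitution $y = t + t^{-1}$, giving $q(t) = t^m E(y)$ with $E(y) = \prod_j(y - b_j)$, expanding the product shows that the coefficient of $t^\ell$ in $q(t)$ is a $\QQ$-linear combination of the elementary symmetric polynomials $s_k = e_k(b_0, \ldots, b_{m-1})$ with $k \equiv \ell \pmod 2$, and the resulting change of variables is invertible. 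Hence the square-rootability of $q$ is equivalent to $s_k \in \sqrt{\a}^{\,k}\QQ$ for every $k$; setting $\tilde b_j = b_j / \sqrt{\a}$, this says exactly that $\tilde E(y) := \prod_j(y - \tilde b_j) \in \QQ[y]$. From $s_m^2 = \prod_j(\beta_j + 2) = -P(-2)$ (using $m$ odd) together with $s_m \in \sqrt{\a}\,\QQ$, the integer $\a$ is forced to equal the square-free part of $-P(-2)$, hence depends only on $x$. Consequently $\tilde b_0 > 0$ is determined by $x$, and $\tilde E$ must be a monic polynomial of degree $m$ in $\QQ[y]$ having $\tilde b_0$ as a root. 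Since $\tilde b_0^2 = (\beta_0 + 2)/\a \in \QQ(\beta_0)$, the degree $[\QQ(\tilde b_0):\QQ]$ is either $m$ (in which case $\tilde E$ must coincide with the minimal polynomial of $\tilde b_0$ and is uniquely determined) or $2m$ (in which case no such $\tilde E$ of degree $m$ exists and $\Phi^{-1}(x) = \emptyset$). Either way, $\#\Phi^{-1}(x) \le 1$.

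The main obstacle is the Galois-theoretic step identifying $\tilde E$ with the minimal polynomial of $\tilde b_0$ together with the exclusion of degeneracies: one needs the $\tilde b_j$'s to be all distinct (which follows from the $\beta_j$'s being distinct as roots of the irreducible $P$, combined with condition (e) of Definition \ref{def:P_m} preventing $\beta_j = -2$) and the verification that the case $[\QQ(\tilde b_0):\QQ] = 2m$ genuinely yields no valid $q$, because the Galois orbit of $\tilde b_0$ would then contain $2m > m$ distinct conjugates, too many to fit inside the size-$m$ multiset $\{\tilde b_j\}$.
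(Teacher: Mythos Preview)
Your proof is correct and takes a genuinely different route from the paper's. For the crude bound $2^{2m}$ the two arguments are essentially identical. For injectivity when $m$ is odd, the paper argues by a case split on $[\QQ(\lambda):\QQ] \in \{2m, 4m\}$: in each case it compares two hypothetical preimages $(\alpha, q)$, $(\alpha', q')$ directly, shows $\sqrt{\alpha}, \sqrt{\alpha'} \in \QQ(\lambda)$, and then passes to the intermediate field $\QQ(\sqrt{\alpha})$ or $\QQ(\sqrt{\alpha}, \sqrt{\alpha'})$, over which $\lambda$ has odd degree $m$; the minimal polynomials $r_1, r_{-1}$ of $\lambda, \lambda^{-1}$ over this field are then distinct (each has odd degree, hence a unique real root, and these real roots differ), so coprime, and their product of degree $2m$ must equal both $q$ and $q'$. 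Your approach instead linearizes via the trace substitution $q(t) = t^m E(t+t^{-1})$ and exploits oddness of $m$ earlier and more concretely: it puts $s_m = e_m(b_0,\ldots,b_{m-1})$ in $\sqrt{\alpha}\,\QQ$, and since $s_m^2 = -P(-2)$ is a nonzero rational determined by $x$ alone, $\alpha$ is pinned down as the square-free part of $-P(-2)$ before any comparison of preimages is made; uniqueness of $q$ then reduces to uniqueness of a monic degree-$m$ rational polynomial with prescribed root $\tilde b_0$. Your method buys an explicit formula for $\alpha$ in terms of $x$, while the paper's gives a more structural explanation of why the parity of $m$ enters (odd-degree irreducible real polynomials have exactly one real root).
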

	
	\begin{proof}[\textbf{Proof of Lemma \ref{lemma:im=sal}}]		
		First we check that the image of $\Phi$ lies inside $\Sal_m^{sq}$. 
		Given $(\a, q) \in P_m$, Lemma \ref{lemma:q(x)q(-x)} tells us that
		\begin{equation}
			\label{eq:from_q(x)q(-x)}
			q(x) q(-x) = p(x^2),
		\end{equation}
		where $p$ is a monic polynomial with integer coefficients, which is also the minimal polynomial of $\l^2 = \Phi((\a, q))$.	
		Again, by condition (d), the roots of $p$ are $\l^2 > 1,\; \l^{-2} < 1$ (both with multiplicity one) and other ones of absolute value $1$.		
		So, $\l^2 = \Phi((\a, q))$ is indeed a Salem number of degree $\deg p = \deg q = 2m$. Then equation \eqref{eq:from_q(x)q(-x)} combined with condition (c) implies that it is also square-rootable.
		
		\medskip
		
		Now we check that $\Sal^{sq}_m$ is contained in the image of $\Phi$. Suppose $\l$ is a square-rootable Salem number with minimal polynomial $p$, $\deg p = 2m$. Consider $\a \in \QQ$ and $q \in \RR[x]$ from Definition \ref{def:square-rootable}. Multiplication of $\a$ by a square in $\QQ$ does not change the set $\sqrt{\a} \QQ$, therefore we may assume that $\a$ is indeed a square-free positive integer. The polynomial $q$ satisfies conditions (a) and (b) automatically. Since
		\[
		q(x)q(-x) = p(x^2), 
		\] 
		then $\pm \l^{1/2}, \pm \l^{-1/2}$ are roots (with multiplicity one) of $q(x)q(-x)$, and all other its roots have absolute value $1$. Recall that a palindromic polynomial has root $z$ if and only if it has root $z^{-1}$, what allows us to conclude (by replacing $q(x)$ with $q(-x)$ if necessary) that $q$ satisfies condition (d) as well. 
		
		Condition (e) follows from irreducibility of $p$, and to check condition (c) we repeat the argument used in the proof of Proposition \ref{prop:salem_square-rootable}: since coefficients of $q$ can be polynomially expressed via its roots, they are algebraic integers; thus, even degree coefficients of $q$ lie in $\QQ \cap \AI = \ZZ$, odd degree coefficients of $q$ lie in 		
		\[
		\sqrt{\a} \QQ \cap \AI = \{ \sqrt{\a} \cdot r \mid r \in \QQ, \; \sqrt{\a} \cdot r \in \AI\} = \{ \sqrt{\a} \cdot r \mid r \in \QQ, \; \a r^2 \in \AI\} = \sqrt{\a} \ZZ.
		\]		
		So, we constructed $(\a, q) \in P_m$, such that $\Phi((\a, q)) = \l$.		
	\end{proof}
	
	\medskip
	
	\begin{proof}[\textbf{Proof of Lemma \ref{lemma:size_of_preimage}}]
		If $p$ is the minimal polynomial of a square-rootable Salem number $\l^2 = \Phi((\a, q))$, then by Lemma \ref{lemma:q(x)q(-x)} we have $q(x) q(-x) = p(x^2)$. Since $q$ determines the value of $\a$ by Lemma \ref{lemma:q_determines_alpha}, the size of $\Phi^{-1}(\l^2)$ is bounded above by the number of decompositions of $p(x^2)$ into a product of the form $q(x)q(-x)$. But any such decomposition is given by choosing one representative from each pair of opposite roots of $p(x^2)$, thus there are at most $2^{2m}$ of them.
		
		To prove injectivity of $\Phi$ in the case of odd $m$ more careful analysis is required. Fix a square-rootable Salem number $\l^2$ of degree $2m$ in the image of $\Phi$.
		Suppose that $\Phi((\a, q)) = \l^2$ for some $(\a, q) \in P_m, \; \a > 1$, then $\l$ is the unique root of $q$ with absolute value greater than one. Conditions (b) and (c) allow us to write
		\[
		q(x) = a_{2m} x^{2m} + \sqrt{\a} a_{2m-1} x^{2m-1} + a_{2m-2} x^{2m-2} + \sqrt{\a} a_{2m-3} x^{2m-3} + \ldots, \; \text{ with } a_i \in \ZZ.
		\]
		Substituting $\l$ and separating even and odd terms gives
		\[
		\sqrt{\a} = \frac{- a_{2m} \l^{2m} - a_{2m-2} \l^{2m-2} - \ldots}{a_{2m-1} \l^{2m-1} + a_{2m-3} \l^{2m-3} + \ldots} \in \QQ(\l).
		\]
		Note that denominator of this fraction is non-zero: indeed, $\deg_{\QQ} \l \ge \deg_{\QQ} \l^2 = 2m$, and we are not allowed to have all the odd degree coefficients equal to zero by Lemma \ref{lemma:q_determines_alpha}.
		
		\medskip
		\textbf{Case 1.} $\Phi((1, q_1)) = \l^2$ for some $(1, q_1) \in P_m$.		
		\medskip
		
		Then $q_1 \in \ZZ[x]$ is the minimal polynomial of $\l$ over $\QQ$, thus there is no other pair in $\Phi^{-1}(\l^2)$ with the first coordinate equal to $1$. Suppose that $\Phi((\a, q)) = \l^2$ for some $(\a, q) \in P_m, \; \a > 1$. From $[\QQ(\l) : \QQ] = \deg q_1 = 2m$ and $\QQ(\sqrt{\a}) \subset \QQ(\l)$ we deduce that
		\[
		[\QQ(\l) : \QQ(\sqrt{\a})] = m, \;\; \text{which is an odd number.}
		\]
		Let $r_1$ be the minimal polynomial of $\l$ over $\QQ(\sqrt{\a})$, $\deg r_1 = m$. Then $r_{-1}(x) = x^m r_1(x^{-1})$ is the minimal polynomial of $\l^{-1}$ over $\QQ(\sqrt{\a})$, and $\deg r_{-1} = m$ as well. Since they are minimal, and $q_1, q \in \QQ(\sqrt{\a})[x]$ have $\l$ and $\l^{-1}$ as their roots by condition (d), we have
		\[
		r_1, r_{-1} \mid q_1, \;\; r_1, r_{-1} \mid q. 
		\]
		Conditions (d) and (e) imply that $q$ has only two real roots. Since the degrees of $r_1$ and $r_{-1}$ are odd, $\l$ is the unique real root of $r_1$ and $\l^{-1}$ is the unique real root of $r_{-1}$. In particular, $r_1 \neq r_{-1}$. Since they are both irreducible over $\QQ(\sqrt{\a})$, this implies that they are coprime, and
		\[
		r_1 r_{-1} \mid q_1, \; r_1 r_{-1} \mid q. 
		\]
		From $\deg r_1 r_{-1} = \deg q = \deg q_1 = 2m$ we conclude that $q = q_1$. But $\a \neq 1$, which is a contradiction with Lemma \ref{lemma:q_determines_alpha}.
		
		\medskip
		
		\textbf{Case 2.} $\Phi((1, q_1)) \neq \l^2$ for any $(1, q_1) \in P_m$. Equivalently, $[\QQ(\l) : \QQ] \neq 2m$.
		\medskip
		
		Since $\l^2$ is a Salem number of degree $2m$, the degree of $\QQ(\l)$ over $\QQ$
		\[
		[\QQ(\l) : \QQ] = [\QQ(\l) : \QQ(\l^2)] \cdot [\QQ(\l^2) : \QQ]
		\] 
		is either $4m$ or $2m$. Since the latter option is ruled out by the assumption, only the first one remains. Suppose that
		\[
		\Phi((\a, q)) = \Phi((\a', q')) = \l^2 \; \text{ for some } (\a, q), (\a', q') \in P_m, \; \a, \a' > 1.
		\]
		This implies, as noted above, that $\sqrt{\a}, \sqrt{\a'} \in \QQ(\l)$, and
		\[
		[\QQ(\l) : \QQ(\sqrt{\a})] = [\QQ(\l) : \QQ(\sqrt{\a'})] = 2m.
		\] 
		Then $q \in \QQ(\sqrt{\a})[x]$ and $q' \in \QQ(\sqrt{\a'})[x]$ are the minimal polynomials of $\l$ over $\QQ(\sqrt{\a})$ and, respectively, $\QQ(\sqrt{\a'})$. Hence $\a = \a'$ would imply $q = q'$, and we may assume $\a \neq \a'$. 
		
		The remaining part of the argument is similar to the previous case. The degree
		\[
		[\QQ(\l) : \QQ(\sqrt{\a}, \sqrt{\a'})] = m 
		\] 
		is odd, thus the minimal polynomials $r_1$ and $r_{-1}$ of $\l$ and, respectively, $\l^{-1}$ over $\QQ(\sqrt{\a}, \sqrt{\a'})$ do not coincide, and therefore are coprime. By their minimality (and condition (d) for $q$ and $q'$) both $r_1$ and $r_{-1}$ divide $q$ and $q'$, hence so does their product $r_1 r_{-1}$. From
		\[
		\deg r_1 r_{-1} = \deg q = \deg q' = 2m
		\] 
		we deduce that $q = q'$. But $\a \neq \a'$, which is a contradiction with Lemma \ref{lemma:q_determines_alpha}.		
	\end{proof}
	
	\begin{remark}
		This sort of ambiguity for even $m$ in Lemma \ref{lemma:size_of_preimage} is the reason why we were not able to obtain the exact asymptotics in Theorem \ref{thm:asymptotics} for even $m$. Note that the map $\Phi$ actually can be non-injective: for example, the root of Salem polynomial $p(x) = x^8 - 56 x^7 - 157 x^6 - 228 x^5 - 247 x^4 - 228 x^3 - 157 x^2 - 56 x + 1$ has four preimages in $P_4$ with the values of $\a$ equal to $2, 6, 26$ and $78$, respectively. It seems unlikely for such phenomenon to occur ``too often'', but we were unable to make this statement rigorous.	
		
	\end{remark}
	
	The following lemma will help us to handle condition (e) while counting polynomials in the set $P_m$ from Definition \ref{def:P_m}.
	\begin{lemma}
		\label{lemma:minus_planes}
		Suppose that the boundary $\partial B$ of some bounded set $B \subset \RR^{m}$ is of Lipschitz class $(M, L)$, and a set $H \subset \RR^{m}$ can be covered by a finite family of hyperplanes $H_1, \ldots, H_K$. Then the boundary of 
		$$
		B' = B \setminus H
		$$ 
		is of Lipschitz class $(MK+M, L)$.
	\end{lemma}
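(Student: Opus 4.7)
The plan is to show that $\partial B' \subseteq \partial B \cup (\overline{B} \cap H)$ and then to construct, for each hyperplane $H_k$, a batch of $M$ new $L$-Lipschitz maps whose images cover $\overline{B} \cap H_k$. The original $M$ maps already cover $\partial B$ and remain in play, so the total count becomes $M + MK$, matching the claimed Lipschitz class $(MK+M, L)$.

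First I would verify the inclusion $\partial B' \subseteq \partial B \cup (\overline{B} \cap H)$. Since a finite union of hyperplanes is closed, a short manipulation of interiors yields $\text{int}(B \setminus H) = \text{int}(B) \setminus H$, and therefore
\[
\partial B' = \overline{B\setminus H} \setminus \text{int}(B\setminus H) \subseteq \overline{B} \setminus (\text{int}(B) \setminus H) = \partial B \cup (\overline{B} \cap H).
\]
For each hyperplane $H_k$, I would then introduce the orthogonal projection $\pi_k : \RR^m \to H_k$, which is $1$-Lipschitz, and form the compositions $\pi_k \circ \varphi_1, \ldots, \pi_k \circ \varphi_M$ of the given maps with $\pi_k$. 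These maps are still $L$-Lipschitz and take values in $H_k \subseteq \RR^m$.

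The step I expect to require the most care is showing that this batch of $M$ projected maps actually covers $\overline{B} \cap H_k$. My strategy: given $x \in \overline{B} \cap H_k$, slide along the line $\ell$ through $x$ perpendicular to $H_k$ to the farthest point $y \in \ell \cap \overline{B}$; such a $y$ exists because $B$ is bounded, so $\ell \cap \overline{B}$ is compact and non-empty. By construction $\pi_k(y) = x$, while any short push past $y$ along $\ell$ leaves $\overline{B}$, so $y \notin \text{int}(B)$ and hence $y \in \partial B = \overline{B} \setminus \text{int}(B)$. Therefore $x \in \pi_k(\partial B) \subseteq \bigcup_i \pi_k(\varphi_i([0,1]^{m-1}))$. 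Combining across all $k$ with the original coverage of $\partial B$ gives the desired $M + MK$ maps of Lipschitz class $L$ covering $\partial B'$.
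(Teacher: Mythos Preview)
Your proof is correct and follows essentially the same approach as the paper: compose the given maps $\varphi_i$ with the orthogonal projections $\pi_k$ onto the hyperplanes $H_k$, and use boundedness of $B$ to show that the perpendicular line through any $x \in \overline{B} \cap H_k$ meets $\partial B$, so that $x$ lies in the image of some $\pi_k \circ \varphi_i$. Your write-up is in fact a bit more careful than the paper's, spelling out the inclusion $\partial B' \subseteq \partial B \cup (\overline{B} \cap H)$ and the compactness argument for locating the boundary point.
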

	\begin{proof}
		Consider the maps $\{\varphi_i\}_{1 \le i \le M}$ covering $\partial B$ by their images. Define $\{\varphi_{ij}\}_{1 \le i \le M, 1 \le j \le K}$ as the composition of $\varphi_i$ with orthogonal projection onto $H_j$. Since the projections do not increase distances, these maps are also $L$-Lipschitz. If $x \in \partial B' \setminus \partial B$, then $x$ lies in the interior of $B$ intersected with one of the hyperplanes $H_{j_0}$. Since $B$ is bounded, the line orthogonal to $H_{j_0}$ passing through $x$ has to intersect the boundary $\partial B$. Therefore $\partial B'$ is covered by the images of $\{\varphi_i\}$ and $\{\varphi_{ij}\}$.
	\end{proof}	
	
	Also we record two simple facts about sums of powers of square-free integers (see \cite{Scott06}, \cite{Hirschhorn03}, \cite[OEIS/A005117]{OEIS}), and include their proofs for reader's convenience. 
	
	\begin{proposition}
		\label{prop:sq-free_log}		
		For $x \to \infty$,
		\[
		\sum_{n \in \NN, \; n \le x, \; n \text{ is square-free}} \frac{1}{n} \sim \frac{6}{\pi^2} \log(x).
		\]
	\end{proposition}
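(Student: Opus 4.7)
The plan is to prove this classical estimate by the standard Möbius indicator trick, converting the sum over square-free integers into a sum against the harmonic series.

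First I would use the Möbius-function characterization of square-free integers:
\[
\mathbf{1}[n \text{ is square-free}] = \sum_{d^2 \mid n} \mu(d).
\]
Substituting this into the sum and swapping the order of summation gives
\[
S(x) := \sum_{\substack{n \le x \\ n \text{ sq-free}}} \frac{1}{n} = \sum_{d \le \sqrt{x}} \mu(d) \sum_{\substack{n \le x \\ d^2 \mid n}} \frac{1}{n} = \sum_{d \le \sqrt{x}} \frac{\mu(d)}{d^2} \sum_{m \le x/d^2} \frac{1}{m},
\]
after writing $n = d^2 m$.

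Next I would apply the standard harmonic sum estimate $\sum_{m \le y} 1/m = \log y + \gamma + O(1/y)$ to the inner sum. This yields
\[
S(x) = \log(x) \sum_{d \le \sqrt{x}} \frac{\mu(d)}{d^2} - 2\sum_{d \le \sqrt{x}} \frac{\mu(d)\log d}{d^2} + \gamma \sum_{d \le \sqrt{x}} \frac{\mu(d)}{d^2} + O\Bigl(\frac{1}{x} \sum_{d \le \sqrt{x}} 1\Bigr).
\]
The first sum tends to $\sum_{d=1}^{\infty} \mu(d)/d^2 = 1/\zeta(2) = 6/\pi^2$ as $x \to \infty$, with tail bound $O(1/\sqrt{x})$ coming from $\sum_{d > \sqrt{x}} 1/d^2$. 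The second and third sums converge absolutely (since $\sum |\mu(d)|\log d / d^2 < \infty$), so they contribute $O(1)$. The error term is $O(1/\sqrt{x})$. Putting it all together,
\[
S(x) = \frac{6}{\pi^2} \log x + O(1),
\]
which in particular gives $S(x) \sim \frac{6}{\pi^2} \log x$.

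There is no real obstacle; the only thing to be careful about is that the truncation of the outer Möbius sum at $d = \sqrt{x}$ is automatic because $d^2 \mid n \le x$ forces $d \le \sqrt{x}$, so no separate tail argument is needed for the range of $d$. All remaining manipulations are routine asymptotic bookkeeping with the Euler product $\sum_d \mu(d)/d^s = 1/\zeta(s)$ at $s = 2$.
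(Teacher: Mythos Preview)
Your proof is correct and in fact yields the sharper estimate $S(x)=\frac{6}{\pi^2}\log x+O(1)$. The paper takes a genuinely different, more elementary route: it avoids the M\"obius function entirely and instead exploits directly the unique factorization of every positive integer as (square-free)$\times$(perfect square). Multiplying the truncated square-free sum by the full series $\zeta(2)=\sum 1/n^2$ dominates $\sum_{n\le N}1/n$, which gives the lower bound $s_N\ge 1$; for the upper bound one truncates $\zeta(2)$ at a finite height $k$ chosen so that $\pi^2/6\le(1+\varepsilon)\sum_{n\le k}1/n^2$, and then the product is bounded by $\sum_{n\le k^2N}1/n$. Your M\"obius-indicator method is the standard analytic-number-theory approach and buys a quantitative error term; the paper's argument is a soft squeeze that only proves the asymptotic $\sim$, but needs nothing beyond $\zeta(2)=\pi^2/6$ and the crude bound $\sum_{n\le N}1/n\ge\log N$.
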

	
	\begin{proof}
		We need to check that the sequence of real numbers
		\[
		s_N = \frac{\pi^2}{6} \cdot \frac{1}{\log N} \cdot \sum_{n \in \NN, \; n \le N, \; n \text{ is square-free}} \frac{1}{n} \qquad \qquad \qquad (N \in \NN, \; N \ge 2)
		\]
		tends to $1$ as $N \to \infty$. Each natural number can be uniquely represented as a product of a square-free natural number and a square of a natural number. Therefore, 
		\[
		s_N = \Big(\sum_{n \in \NN} \frac{1}{n^2} \Big) \cdot \frac{1}{\log N} \cdot \Big(\sum_{n \in \NN, \; n \le N, \; n \text{ is square-free}} \frac{1}{n} \Big) \ge \frac{1}{\log N} \cdot \sum_{n \le N} \frac{1}{n} \ge 1.
		\]		
		On the other hand, for any $\eps > 0$ we can fix $k \in \NN$, such that
		\[
		\frac{\pi^2}{6} \le (1 + \eps) \sum_{n \le k} \frac{1}{n^2}.
		\]
		Then 
		\begin{align*}
			s_N &\le (1 + \eps) \cdot \Big(\sum_{n \le k} \frac{1}{n^2} \Big) \cdot \frac{1}{\log N} \cdot \Big(\sum_{n \in \NN, \; n \le N, \; n \text{ is square-free}} \frac{1}{n} \Big) \\ &\le (1 + \eps) \cdot \frac{1}{\log N} \cdot \sum_{n \le k^2N} \frac{1}{n} \le (1 + \eps) \cdot \frac{\log N + 2\log k + 1}{\log N} \to 1 + \eps.
		\end{align*}
		So, for any $\eps > 0$
		\[
		1 \le \liminf_{N \to \infty} s_N \le \limsup_{N \to \infty} s_N \le 1 + \eps,
		\]
		and we send $\eps \to 0$ to finish the proof.
	\end{proof}
	
	\begin{proposition}
		\label{prop:sq-free_zeta}
		For $s > 1$,
		\[
		\sum_{n \in \NN, \; n \text{ is square-free}} n^{-s} = \frac{\zeta(s)}{\zeta(2s)}.
		\]
	\end{proposition}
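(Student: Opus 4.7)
The plan is to leverage the same unique factorization idea that was used in the proof of Proposition \ref{prop:sq-free_log}: every positive integer $n$ admits a unique representation $n = a b^2$ with $a$ square-free and $b \in \NN$. First I would split the Dirichlet series for $\zeta(s)$ according to this decomposition, writing
\[
\zeta(s) \;=\; \sum_{n \in \NN} n^{-s} \;=\; \sum_{b \in \NN} \;\sum_{a \text{ square-free}} (a b^2)^{-s} \;=\; \Big( \sum_{a \text{ square-free}} a^{-s} \Big) \cdot \Big( \sum_{b \in \NN} b^{-2s} \Big).
\]
The rearrangement is legitimate because for $s > 1$ all terms are positive and $\zeta(s) < \infty$, so Tonelli's theorem applied to the counting measure lets me split the double sum into a product of two absolutely convergent series.

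The second factor on the right is $\zeta(2s)$, which is finite and strictly positive for $s > 1$; dividing through immediately yields
\[
\sum_{a \text{ square-free}} a^{-s} = \frac{\zeta(s)}{\zeta(2s)},
\]
which is exactly the claim. As a sanity check, one can see the same identity pop out of the Euler product, since $\zeta(s)/\zeta(2s) = \prod_p \frac{1 - p^{-2s}}{1 - p^{-s}} = \prod_p (1 + p^{-s})$, and expanding this product term-by-term picks up each prime with exponent either $0$ or $1$, recovering precisely the sum over square-free integers.

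There is no real obstacle: the unique factorization $n = a b^2$ is elementary, and absolute convergence for $s > 1$ handles the only analytic subtlety. The proposition is short and self-contained, and a couple of lines should suffice once the decomposition is stated.
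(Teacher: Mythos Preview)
Your proof is correct and follows essentially the same approach as the paper: both use the unique decomposition $n = a b^2$ with $a$ square-free to factor $\zeta(s)$ as the product of the square-free Dirichlet series and $\zeta(2s)$, then divide. The Euler product remark you add is a nice extra but not needed.
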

	
	\begin{proof}
		Again, recall that each natural number can be uniquely represented as a product of a square-free natural number and a square of a natural number. Therefore, 
		\[
		\Big(\sum_{n \in \NN} \frac{1}{n^{2s}}\Big) \cdot \Big( \sum_{n \in \NN, \; n \text{ is square-free}} \frac{1}{n^s} \Big) = \sum_{n \in \NN} \frac{1}{n^s}.
		\]
		Since $s > 1$, each of these three series absolutely converges. Then by definition of the Riemann zeta function we conclude that
		\[
		\sum_{n \in \NN, \; n \text{ is square-free}} \frac{1}{n^s} = \Big(\sum_{n \in \NN} \frac{1}{n^s}\Big) \Big/ \Big(\sum_{n \in \NN} \frac{1}{n^{2s}}\Big) = \frac{\zeta(s)}{\zeta(2s)}.
		\]
	\end{proof}
	
	\begin{proof}[\textbf{Proof of Theorem \ref{thm:asymptotics}}]
		For a positive square-free integer $\a$ define
		\[
		P_{m ,\a} = P_{m, \a}(Q) = \{q \in \RR[x] : (\a, q) \in P_m, \; \Phi((\a, q))  \le Q\}, 
		\]
		where $P_m$ and $\Phi$ are as in Definition \ref{def:P_m}. These sets are disjoint by Lemma \ref{lemma:q_determines_alpha}, and
		by Lemma \ref{lemma:im=sal} and Lemma \ref{lemma:size_of_preimage} we have
		\begin{equation}
			\label{eq:odd}
			\#\Sal_m^{sq}(Q) = \sum_{\a \in \NN, \; \text{square-free}} \# P_{m, \a} \;\;\; \text{ for odd $m$, and }
		\end{equation}
		\begin{equation}
			\label{eq:even}
			\frac{1}{2^{2m}}\Big(\sum_{\a \in \NN, \; \text{square-free}} \# P_{m, \a} \Big) \le \# \Sal_m^{sq}(Q) \le \sum_{\a \in \NN, \; \text{square-free}} \# P_{m, \a}  \;\;\; \text{ for even $m$.}
		\end{equation}
		
		Notice that these sums are actually finite.  Indeed, similarly to the argument from the proof of Proposition \ref{prop:salem_square-rootable}, we see that $q \in P_{m, \a}$ has exactly one root with absolute value greater than $1$, and its absolute value is at most $R = Q^{1/2}$. Therefore, absolute values of coefficients of $q$ have to be bounded above by $c_0 R$ for some constant $c_0 = c_0(m)$, and we can not have $\a > (c_0 R)^2$.
		
		So, the problem is reduced to calculation of the sizes of sets $P_{m, \a}$.
		
		\medskip
		
		Following the strategy of G\"otze and Gusakova, let $V^{all}_m$ be the set of monic palindromic polynomials in $\RR[x]$ of degree $2m$, identified with a subset of $\RR^m$ via considering the coefficients at $x, x^2, \ldots, x^m$. 
		Consider the map $f_R$ defined as in formula \eqref{eq:f} 
		\[
		f_R: \{(\l, \theta_1, \ldots, \theta_{m-1}) \in \RR^m \mid 1 < \l \le R, \; 0 \le \theta_1 \le \ldots, \le \theta_{m-1} \le \pi\} \to V^{all}_m,
		\]			
		\[
		(\l, \theta_1, \ldots, \theta_{m-1}) \mapsto (x-\l)(x-\l^{-1})\prod_{j = 1}^{m-1}(x-e^{i\theta_j})(x-e^{-i\theta_j}).
		\]	
		
		Let $V_m(R) \subset V_m^{all}$ be the image of $f_R$. Condition (c) suggests us to look at the lattice $\Lambda_\a = \sqrt{\a}\ZZ e_1 + \ZZ e_2 + \sqrt{\a}\ZZ e_3 + \ldots \subset \RR^m$. 
		Observe that the set of lattice points $\Lambda_\a \cap V_m(R)$ consists of polynomials satisfying conditions (a)-(d) from Definition \ref{def:P_m}, with the root $\l$ being at most $R = Q^{1/2}$. Therefore, this set almost coincides with $P_{m, \a}$, and to make it satisfy condition (e) as well, we exclude from $V_m(R)$ the following sets:
		\[
		Z(\xi) = \{ q \in V^{all}_m \mid q(\xi) = 0\}, \text{ where $\xi$ is a root of unity of degree at most $4m$}.
		\]
		Both $\Ree (q(\xi)) = 0$ and $\Imm(q(\xi)) = 0$ are linear equations on the coefficients of $q$, and at least one of them is non-trivial --- therefore, each $Z(\xi)$ lies inside a hyperplane. Recall that boundary of $V_m(R)$ is of Lipschitz class $(M, cR)$ by Lemma \ref{lemma:M_cQ}, then Lemma \ref{lemma:minus_planes} allows to conclude that the boundary of 
		$$
		V'_m(R) = V_m(R) \setminus \Big(\bigcup_{\xi^{d} = 1, \; d \le 4m} Z(\xi)\Big)
		$$
		is of Lipschitz class $(M', cR)$ for some constant $M' \le (4m)^2 M$. Now we indeed have $P_{m, \a} = \Lambda_\a \cap V'_m(R)$, and lattice points counting Theorem \ref{thm:lattice_points} yields
		$$
		\Bigl| \# P_{m, \a} - \frac{\Vol(V'_m(R))}{\sqrt{\a}^{[\frac{m+1}{2}]}} \Bigr| = \Bigl| \#(\Lambda_\a \cap V'_m(R)) - \frac{\Vol(V'_m(R))}{\sqrt{\a}^{[\frac{m+1}{2}]}} \Bigr| = O\Big( \frac{R^{m-1}}{\sqrt{\a}^{[\frac{m+1}{2}]-1}}\Big) \; \text{ for } \a \le (c_0 R)^2.
		$$
		Recall the volume estimate $\eqref{eq:v_m_w_m}$
		$$
		\Vol(V'_m(R)) = \Vol(V_m(R)) = w_{m-1} R^m + O(R^{m-1}), 
		$$
		then
		$$
		\# P_{m, \a} = w_{m-1} \frac{R^m}{\sqrt{\a}^{[\frac{m+1}{2}]}} +  O\Big(\frac{R^{m-1}}{\sqrt{\a}^{[\frac{m+1}{2}]-1}}\Big).
		$$
		Taking the sum over square-free positive integers up to $(c_0 R)^2$ gives
		\begin{equation}
			\label{eq:sum_P_a}
			\sum_{\substack{1 \le \a \le (c_0 R)^2, \\ \text{square-free}}} \# P_{m, \a} = w_{m-1} R^m \cdot \Big( \sum_{\substack{1 \le \a \le (c_0 R)^2, \\ \text{square-free}}} \frac{1}{\sqrt{\a}^{[\frac{m+1}{2}]}} \Big)  + O\Big(R^{m-1} \cdot \sum_{\substack{1 \le \a \le (c_0 R)^2, \\ \text{square-free}}} \frac{1}{\sqrt{\a}^{[\frac{m+1}{2}]-1}} \Big)
		\end{equation}
		Next we apply Proposition \ref{prop:sq-free_log} to compute the first term of equation \eqref{eq:sum_P_a} for $m = 3, 4$ (and use a trivial bound $\sum_{n \le x} 1/\sqrt{n} = O(\sqrt{x})$ for the second term):
		\begin{equation}
			\label{eq:34}
			\sum_{\substack{1 \le \a \le (c_0 R)^2, \\ \a \text{ is square-free}}} \# P_{m, \a} = w_{m-1} R^m \cdot \frac{6}{\pi^2} \cdot 2 \log(R) \cdot (1 + o(1)) + O(R^m).  
		\end{equation}
		Similarly, we use Proposition \ref{prop:sq-free_zeta} to compute the first term of equation \eqref{eq:sum_P_a} for $m \ge 5$:		
		\begin{equation}
			\label{eq:ge5}
			\sum_{\substack{1 \le \a \le (c_0 R)^2, \\ \a \text{ is square-free}}} \# P_{m, \a} = w_{m-1} R^m \cdot \frac{\zeta(\frac12 [\frac{m+1}{2}])}{\zeta([\frac{m+1}{2}])} \cdot (1 + o(1)) + O(R^{m-1} \log R).
		\end{equation}
		It remains to recall that $R = Q^{1/2}$ and combine equations \eqref{eq:34} and \eqref{eq:ge5} with equations \eqref{eq:odd} and \eqref{eq:even} to obtain the desired estimates.
		
	\end{proof}
	
	\bibliographystyle{plain}
	
	\bibliography{Salem_numbers}
	
\end{document}